\DeclareMathOperator{\lspan}{span}                          % linear span
\DeclareMathOperator{\diam}{diam}                           % diameter
\DeclareMathOperator{\Lip}{Lip}                             % Lipschitz functions
\newcommand{\NN}{\mathbb{N}}                                % natural numbers
\newcommand{\RR}{\mathbb{R}}                                % real numbers
\newcommand{\SA}{\operatorname{SNA}}
\newcommand{\NA}{\operatorname{NA}}
\newcommand{\abs}[1]{\left|{#1}\right|}                     % absolute value
\newcommand{\pare}[1]{\left({#1}\right)}                    % parentheses
\newcommand{\set}[1]{\left\{{#1}\right\}}                   % set by extension
\newcommand{\norm}[1]{\left\|{#1}\right\|}                  % norm
\newcommand{\sphere}[1]{S_{{#1}}}                           % unit sphere
\newcommand{\duality}[1]{\left<{#1}\right>}                 % dual action
\newcommand{\restrict}{\mathord{\upharpoonright}}           % restriction symbol
\newcommand{\lipfree}[1]{\mathcal{F}({#1})}                 % Lipschitz-free space
\newcommand{\lipnorm}[1]{\norm{#1}_L}                       % Lipschitz norm/constant
\newcommand{\mol}[1]{m_{#1}}                                % elementary molecules
\newcommand{\pten}{\,\ensuremath{\widehat{\otimes}_\pi}\,}
\theoremstyle{plain}
\newtheorem{theorem}{Theorem}[section]
\newtheorem{lemma}[theorem]{Lemma}
\newtheorem{corollary}[theorem]{Corollary}
\newtheorem{proposition}[theorem]{Proposition}
\theoremstyle{definition}
\newtheorem*{definition*}{Definition}
\newtheorem{example}[theorem]{Example}
\newtheorem{question}{Question}
\theoremstyle{remark}
\newtheorem{remark}[theorem]{Remark}
\begin{document}

\title{Points of differentiability of the norm in Lipschitz-free spaces}

\author[R. J. Aliaga]{Ram\'on J. Aliaga}
\address[R. J. Aliaga]{Instituto Universitario de Matem\'atica Pura y Aplicada,
Universitat Polit\`ecnica de Val\`encia,
Camino de Vera S/N,
46022 Valencia, Spain}
\email{raalva@upvnet.upv.es}
\thanks{R. J. Aliaga was partially supported by the Spanish Ministry of Economy, Industry and Competitiveness under Grant MTM2017-83262-C2-2-P, and by a travel grant of the Institute of Mathematics (IEMath-GR) of the University of Granada, Spain.}

\author[A. Rueda Zoca]{Abraham Rueda Zoca}\thanks{The research of Abraham Rueda Zoca was supported by Vicerrectorado de Investigaci\'on y Transferencia de la Universidad de Granada in the program ``Contratos puente'', by MICINN (Spain) Grant PGC2018-093794-B-I00 (MCIU, AEI, FEDER, UE), by Junta de Andaluc\'ia Grant A-FQM-484-UGR18
and by Junta de Andaluc\'ia Grant FQM-0185}
\address[A. Rueda Zoca]{Universidad de Granada,
Facultad de Ciencias,
Departamento de An\'alisis Matem\'atico,
18071 Granada, Spain}
\email{abrahamrueda@ugr.es}
\urladdr{\url{https://arzenglish.wordpress.com}}

\date{} % uncomment to remove date from title

% ABSTRACT

\begin{abstract}
We consider convex series of molecules in Lipschitz-free spaces, i.e. elements of the form $\mu=\sum_n \lambda_n \frac{\delta_{x_n}-\delta_{y_n}}{d(x_n,y_n)}$ such that $\|\mu\|=\sum_n |\lambda_n |$. We characterise these elements in terms of geometric conditions on the points $x_n$, $y_n$ of the underlying metric space, and determine when they are points of G\^ateaux differentiability of the norm. In particular, we show that G\^ateaux and Fr\'echet differentiability are equivalent for finitely supported elements of Lipschitz-free spaces over uniformly discrete and bounded metric spaces, and that their tensor products with G\^ateaux (resp. Fr\'echet) differentiable elements of a Banach space are G\^ateaux (resp. Fr\'echet) differentiable in the corresponding projective tensor product.
\end{abstract}

% KEYWORDS

\subjclass[2010]{Primary 46B20; Secondary 49J50}
%46B20 (Geometry and structure of normed linear spaces)
%49J50 (Frechet and Gateaux differentiability)

\keywords{Fr\'echet differentiability, G\^ateaux differentiability, Lipschitz-free space}

\maketitle

% MAIN DOCUMENT

\section{Introduction}

A \emph{pointed metric space} is just a metric space $M$ in which we distinguish an element, called $0$. Given a pointed metric space $M$ and a Banach space $X$, we write $\Lip(M,X)$ ($\Lip(M)$ when $X=\mathbb R$) to denote the Banach space of all Lipschitz maps $f: M\longrightarrow X$ which vanish at $0$, endowed with the Lipschitz norm defined by
$$ \Vert f \Vert_L := \sup\left\{\frac{\Vert f(x)-f(y)\Vert }{d(x,y)} \colon x,y\in M,\, x \neq y \right\}.$$
We denote by $\delta$ the canonical isometric embedding of $M$ into $\Lip(M)^*$, which is given by $\langle f, \delta(x) \rangle =f(x)$ for $x \in M$ and $f \in \Lip(M)$. We denote by $\mathcal{F}(M)$ the norm-closed linear span of $\delta(M)$ in the dual space $\Lip(M)^*$, which is usually called the \textit{Lipschitz-free space over $M$}; for background on this, see the survey \cite{Godefroy_2015} and the book \cite{Weaver2} (where it receives the name of ``Arens-Eells space''). It is well known that $\mathcal{F}(M)$ is an isometric predual of the space $\Lip(M)$ \cite[p. 91]{Godefroy_2015}. We will write $\delta_x:=\delta(x)$ for $x\in M$, and use the name \textit{molecule} for those elements of $\mathcal F(M)$ of the form
$$m_{x,y}:=\frac{\delta_x-\delta_y}{d(x,y)}$$
for $x,y\in M$ such that $x\neq y$.

A fundamental result in the theory of Lipschitz-free spaces is that, roughly speaking, Lipschitz-free spaces linearise Lipschitz maps. In a more precise language, given a metric space $M$, a Banach space $X$ and a Lipschitz map $f:M\longrightarrow X$ such that $f(0)=0$, there exists a bounded operator $\hat f:\mathcal F(M)\longrightarrow X$ such that $\Vert \hat f\Vert=\Vert f\Vert_L$ defined by
$$\hat f(\delta_m):=f(m) \ , \quad m\in M.$$
Moreover, the mapping $f\longmapsto \hat f$ is an onto linear isometry between $\Lip(M,X)$ and the space of bounded operators $\mathcal L(\mathcal F(M),X)$.
This linearisation property makes Lipschitz-free spaces a precious magnifying glass to study Lipschitz maps between metric spaces, and for example it relates some well-known open problems in the Banach space theory to some open problems about Lipschitz-free spaces (see \cite{Godefroy_2015}). Because of this reason, the isomorphic structure of those spaces has been intensively studied in the last 20 years (see e.g. \cite{HaLaPe_2016,Kalton_2004,LanPer_2013}). In addition, the isometric structure of Lipschitz-free spaces has also been the subject of recent research (see \cite{AG_19, AP_20, GaPrRu_2018, PrRu_2018}). Results about the geometry of Lipschitz-free spaces (to be more precise, about its extremal structure) have been applied to the study of norm-attainment of Lipschitz functions \cite{CCGMR_19, Godefroy_2015, KMS_2015} and composition operators between spaces of Lipschitz functions \cite{JV_2020, Rueda_2020, Weaver2}.

In this paper we focus on the analysis of points of G\^ateaux and Fr\'echet differentiability in the unit ball of Lipschitz-free spaces.
The first results in this line appeared on \cite{BLR_2018}, where an example of a metric space $M$ is exhibited such that $\mathcal F(M)$ has a point of Fr\'echet differentiability and it is shown that this is only possible when $M$ is bounded and uniformly discrete.
This was extended in \cite[Theorem 4.3]{PrRu_2018} where it is proved that, for such $M$, a convex combination of the form $\sum_{i=1}^n \lambda_i m_{x_i,0}$ is a point of Fr\'echet differentiability if, and only if, it is a point of G\^ateaux differentiability if, and only if, $M$ is the union of the segments $[0,x_i]$.

Our goal in this paper is to extend this result to an arbitrary element $\mu\in S_{\mathcal F(M)}$ of finite support. One of the main difficulties lies in determining when a convex combination of molecules $\sum_{i=1}^n \lambda_i m_{x_i,y_i}$ has norm 1, which clearly implies that $\Vert m_{x_1,y_1}+\ldots+m_{x_n,y_n}\Vert=n$. To do so, we draw inspiration from \cite[Theorem 3.1]{PrRu_2018}, where a metric characterisation of octahedrality in $\mathcal F(M)$, which in particular involves molecules at distance almost $2$, is given in terms of a geometric condition on $M$. Motivated by that result, we prove in Theorem \ref{th:sum_of_molecules_na} a characterisation of those elements $\mu$ which are the limit of a convex series of molecules, i.e. $\mu=\sum_{n} \lambda_n m_{x_n,y_n}$ (finite or infinite sum) for which $\Vert \mu\Vert=\sum_{n} \vert \lambda_n\vert$. As an easy corollary of Theorem \ref{th:sum_of_molecules_na}, we rediscover the characterisation of sequences of molecules which are isometrically equivalent to the $\ell_1$ basis given in \cite{OsOs_19}.

In Section \ref{section:puntodif} we prove our desired result. Given a uniformly discrete and bounded metric space $M$, an element of finite support $\mu=\sum_{i=1}^n \lambda_i m_{x_i,y_i}\in S_{\mathcal F(M)}$ is a point of Fr\'echet differentiability if and only if it is a point of G\^ateaux differentiability, and this is characterised by a certain geometric condition on the pairs of points $(x_i,y_i)$ that implies, in particular, that $M$ is contained in the union of the segments spanned by these points (see Theorem \ref{th:carafrechet} for the formal statement). This extends \cite[Theorem 4.3]{PrRu_2018} to arbitrary finitely supported elements. Furthermore, we explore whether this result can be extended from elements of finite support to elements which are limit of a convex series of molecules. In this sense,
we prove a similar condition for an element $\mu=\sum_{n=1}^\infty \lambda_n m_{x_n,y_n}$ to be a point of G\^ateaux differentiability that implies that $M$ must be almost contained in the union of finitely many segments spanned by the points $x_i$ and $y_i$ (cf. Theorem \ref{theo:caragateauxarbi}).
We also provide examples, in the infinite setting, of points of Fr\'echet differentiability and points where the norm is G\^ateaux but not Fr\'echet differentiable.

Finally, in Section \ref{section:open}, we show how to apply our techniques to obtain canonical examples of points of G\^ateaux (resp. Fr\'echet) differentiability in projective tensor products $\lipfree{M}\pten X$, when $X$ is a Banach space with points of G\^ateaux (resp. Fr\'echet) differentiability and $\lipfree{M}$ has finitely supported points of Fr\'echet differentiability (see Theorem \ref{th:diferenvectorval}). Let us recall that this is not true in general for projective tensor products (cf. Remark \ref{remark:contraejetensodiffe}).

\textbf{Notation:} 
Given a metric space $M$ and two points $x,y\in M$, the sets of the form
$$[x,y]=\{z\in M: d(x,z)+d(y,z)=d(x,y)\}$$
will be called \emph{(metric) segments}. Also, given $\varepsilon>0$, we will consider the sets
$$[x,y]_\varepsilon:=\{z\in M: d(x,z)+d(y,z)<d(x,y)+\varepsilon\}.$$
We say that $M$ is \textit{uniformly discrete} if $\inf\{d(x,y): x,y\in M, x\neq y\}>0$. If $M$ is a bounded metric space, we will denote its diameter by $\diam(M)$.

We will consider only real Banach spaces, and denote by $B_X$ and $S_X$ the closed unit ball and the unit sphere of a Banach space $(X,\norm{\cdot})$. Also, we will denote by $X^*$ the topological dual of $X$. We say that $x\in X$ is a \textit{point of G\^ateaux (resp. Fr\'echet) differentiability of $X$} if the norm $\norm{\cdot}$ is G\^ateaux (resp. Fr\'echet) differentiable at $x$. By the convexity of $\norm{\cdot}$, G\^ateaux differentiability at $x$ is equivalent to the existence of the limit 
$$\lim\limits_{t\rightarrow 0}\frac{\Vert x+th\Vert-\Vert x\Vert}{t}$$
for every $h\in X$, and Fr\'echet differentiability corresponds to the limit being uniform for $h\in S_X$.
By \v Smulyan's lemma \cite[Theorem I.1.4]{dgz}, $x\in S_X$ is a point of G\^ateaux differentiability of $X$ if, and only if, there exists a unique $f\in S_{X^*}$ such that $f(x)=1$, and it is a point of Fr\'echet differentiability if, and only if
$$\inf\limits_{\alpha>0}\diam(\{f\in S_{X^*}: f(x)>1-\alpha\})=0 .$$

\section{A metric characterisation of convex series of molecules}\label{section:alcanorma}

Let $M$ be a metric space. In this section we will study the elements $\mu\in S_{\mathcal F(M)}$ which are limit in norm of a convex series; in other words, elements for which there exist a pair of sequences $(x_n), (y_n)$ in $M$ such that $x_n\neq y_n$ for every $n\in\mathbb N$ and a sequence $(\lambda_n)$ of real numbers such that 
\begin{equation}
\label{eq:convex_series}
\mu=\sum_{n=1}^\infty \lambda_n m_{x_n,y_n} \text{ where } \lambda_n\geq 0 \text{ and } \sum_{n=1}^\infty \lambda_n=1 .
\end{equation}
It is well known that any $\mu\in\sphere{\lipfree{M}}$ may be expressed as a series of type \eqref{eq:convex_series} where $\sum_{n=1}^\infty \lambda_n\leq 1+\varepsilon$, for any $\varepsilon>0$ (see e.g. \cite[Lemma 2.1]{AP_20}), but $\varepsilon=0$ is not always attainable (one such case will be described in Example \ref{example:bichogateaux}). Let us also recall that any element of $\sphere{\lipfree{M}}$ with finite support can be expressed as a finite sum of type \eqref{eq:convex_series} with $\sum_{n=1}^N \lambda_n=1$, e.g. by \cite[Proposition 3.16]{Weaver2}.

\begin{remark}\label{remark:motivacionsuma}
%\hfill
Elements of the form \eqref{eq:convex_series} appear naturally in certain problems about the geometry of $\mathcal F(M)$:
\begin{enumerate}[label={(\alph*)}]
    \item An open problem that goes back to the first edition of the book \cite{Weaver2} in 1999 asks whether every extreme point of $B_{\mathcal F(M)}$ is a molecule. The answer is positive for the limit of a convex series of molecules \cite[Remark 3.4]{APPP_2020}.
    \item Given a metric space $M$ and a Banach space $X$, it is said that a Lipschitz function $f:M\longrightarrow X$ \textit{strongly attains its norm} if there exists a pair of different points $x\neq y\in M$ such that
    $$\norm{f(\mol{x,y})}= \frac{\Vert f(x)-f(y)\Vert}{d(x,y)}=\Vert f\Vert_L.$$
    Denote by $\SA(M,X)$ the set of all the strongly norm attaining Lipschitz mappings and $\NA(\mathcal F(M),X)$ the set of those bounded operators from $\mathcal F(M)$ to $X$ which attain their norm. It is known that $\SA(M,X)\subseteq \NA(\mathcal F(M),X)$ and the inclusion may be strict in general (see \cite{KMS_2015}). However, if an element $f\in \NA(\mathcal F(M), X)$ attains its norm at an element which is the limit of a convex series of molecules, then an easy convexity argument shows that $f\in \SA(M,X)$.
\end{enumerate}
\end{remark}

The following general lemma will be central to our characterisation of sums of convex series in $\lipfree{M}$. It will also be used in the next section to characterise points of G\^ateaux differentiability:

\begin{lemma}
\label{lm:alpha_beta}
Let $\beta_{jk}$, $j,k\in\NN$ be real numbers such that $\beta_{jj}=0$ for every $j\in\NN$. Then there exist real numbers $\alpha_j$, $j\in\NN$ such that
\begin{equation}
    \label{eq:alpha_inequality}
    \alpha_k \leq \alpha_j+\beta_{kj}
\end{equation}
for $j,k\in\NN$ if and only if for every finite sequence $i_1,\ldots,i_m$ of natural numbers we have
\begin{equation}
    \label{eq:beta_inequality}
    \beta_{i_1i_2}+\beta_{i_2i_3}+\ldots+\beta_{i_{m-1}i_m}+\beta_{i_mi_1} \geq 0 .
\end{equation}
Moreover, the $\alpha_j$ are unique (up to an additive constant) if and only if for every pair of different numbers $j,k\in\NN$ and every $\varepsilon>0$ there is a finite sequence $i_1,\ldots,i_m$ of natural numbers that contains both $j$ and $k$ and such that
\begin{equation}
    \label{eq:beta_bound}
    \beta_{i_1i_2}+\beta_{i_2i_3}+\ldots+\beta_{i_{m-1}i_m}+\beta_{i_mi_1} \leq \varepsilon .
\end{equation}
\end{lemma}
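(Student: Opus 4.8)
The plan is to recognize this as essentially a statement about potentials on a weighted directed graph, and to prove both equivalences by a ``shortest path'' / cycle argument. For the first equivalence, consider the complete directed graph on $\NN$ where the edge from $k$ to $j$ carries weight $\beta_{kj}$. The inequalities \eqref{eq:alpha_inequality} say precisely that $(\alpha_j)$ is a \emph{potential}: $\alpha_k - \alpha_j \le \beta_{kj}$ for all $j,k$, equivalently the reduced weights $\beta_{kj} - \alpha_k + \alpha_j$ are all nonnegative. The necessity of \eqref{eq:beta_inequality} is immediate: summing \eqref{eq:alpha_inequality} along a closed cycle $i_1 \to i_2 \to \cdots \to i_m \to i_1$ telescopes the $\alpha$'s to zero and leaves $\sum \beta_{i_r i_{r+1}} \ge 0$ (using $\beta_{jj}=0$ to handle repeated indices if needed). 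For sufficiency, fix a base point, say $1$, set $\alpha_1 = 0$, and define
\[
\alpha_k := \inf\set{\beta_{k i_1} + \beta_{i_1 i_2} + \cdots + \beta_{i_{\ell} 1} : \ell \ge 0,\ i_1,\ldots,i_\ell \in \NN}
\]
(the infimum of weights of all directed paths from $k$ to $1$; for $k=1$ the empty path gives $0$). Condition \eqref{eq:beta_inequality} guarantees this infimum is finite and bounded below (a path from $k$ to $1$ of very negative weight, concatenated with any fixed path from $1$ to $k$, would form a negative cycle), and \eqref{eq:alpha_inequality} follows because prepending the edge $k \to j$ to a path from $j$ to $1$ is a path from $k$ to $1$, so $\alpha_k \le \beta_{kj} + \alpha_j$.

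For the moreover part, note that the solution set of \eqref{eq:alpha_inequality} is invariant under adding a constant, so ``unique up to a constant'' means: any two solutions differ by a constant. Suppose $(\alpha_j)$ and $(\alpha'_j)$ are solutions; set $\gamma_j = \alpha_j - \alpha'_j$. I would like to show $\gamma_j$ is constant under \eqref{eq:beta_bound}. The key observation is that for a solution $(\alpha_j)$, the difference $\alpha_k - \alpha_j$ is sandwiched: from \eqref{eq:alpha_inequality} applied to the pair and to a path from $j$ to $k$ we get, for any finite sequence joining the two points, $-(\beta_{j i_1} + \cdots + \beta_{i_\ell k}) \le \alpha_k - \alpha_j \le \beta_{k i'_1} + \cdots + \beta_{i'_{\ell'} j}$ along any two such paths. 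If condition \eqref{eq:beta_bound} holds, one can choose a \emph{single} closed cycle through both $j$ and $k$ with total weight $\le \varepsilon$; splitting it at $j$ and $k$ into a path from $j$ to $k$ of weight $a$ and a path from $k$ to $j$ of weight $b$ with $a+b \le \varepsilon$, and combining with the potential inequalities $\alpha_k - \alpha_j \le b$ and $\alpha_j - \alpha_k \le a$, we deduce $-a \le \alpha_k - \alpha_j \le b$ and hence $\abs{(\alpha_k - \alpha_j) - (\alpha'_k - \alpha'_j)} \le a + b \le \varepsilon$. Letting $\varepsilon \to 0$ gives $\alpha_k - \alpha_j = \alpha'_k - \alpha'_j$, i.e. $\gamma_k = \gamma_j$; since $j,k$ were arbitrary, $\gamma$ is constant. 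Conversely, if \eqref{eq:beta_bound} fails for some pair $j,k$ and some $\varepsilon_0 > 0$, then every cycle through $j$ and $k$ has weight $> \varepsilon_0$; this slack lets us perturb the canonical solution — e.g. redefine $\alpha_k$ to be the path-infimum computed with the edge weight out of $k$ toward $1$ reduced, or more cleanly build a second solution agreeing with the first except shifted on the ``$k$-side'' by a small amount — producing two solutions not differing by a constant. I expect this converse construction to be the main obstacle: one must carefully exhibit a genuine second solution, and the cleanest route is probably to set $\alpha'_j = \max\set{\alpha_j, \alpha_k^{(1)} + \delta - (\text{dist from } j \text{ to } k)}$ for suitable small $\delta$, or to argue via the two extremal solutions (sup of paths from the base, inf of paths to the base) and show they differ at $k$ when no short cycle exists. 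I would verify that such an $(\alpha'_j)$ still satisfies \eqref{eq:alpha_inequality} using the triangle-type inequality for path distances together with the strict positivity of all relevant cycles.

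A couple of remarks on rigor to fold into the write-up: one should be slightly careful that infima are over possibly infinitely many paths but are nevertheless attained in the bound, and that repeated indices in \eqref{eq:beta_inequality} cause no trouble because $\beta_{jj} = 0$ lets us either insert or delete trivial loops freely; also a path from $k$ to $1$ and back gives a cycle so \eqref{eq:beta_inequality} indeed bounds $\alpha_k$ below by $-(\text{weight of any path } 1 \to k)$. With these caveats the argument is the standard Bellman–Ford / no-negative-cycle dichotomy adapted to a countable index set.
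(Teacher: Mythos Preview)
Your plan is essentially the paper's: define path-infima $B_{jk}$ (your ``shortest path'' weights), set $\alpha_j = B_{j1}$ for existence, and use the sandwich $-B_{kj} \le \alpha_j - \alpha_k \le B_{jk}$ to deduce uniqueness from \eqref{eq:beta_bound}. The one part you leave open --- producing a second solution when \eqref{eq:beta_bound} fails --- is handled in the paper by exactly the ``two extremal solutions'' idea you float last: one checks that \emph{both} $\alpha_j = B_{jq}$ and $\alpha_j = -B_{qj}$ satisfy \eqref{eq:alpha_inequality} (the latter via $-B_{qj} \ge -B_{qk} - B_{kj} \ge -B_{qk} - \beta_{kj}$), so uniqueness up to a constant forces $B_{jq} - B_{kq} = -B_{qj} + B_{qk}$, and taking $q=k$ yields $B_{jk} + B_{kj} = 0$, equivalent to \eqref{eq:beta_bound}. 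Commit to this route rather than the $\max$-perturbation; it is a one-line verification and sidesteps the extra checking you anticipate.
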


\begin{proof}
To see that \eqref{eq:alpha_inequality} implies \eqref{eq:beta_inequality}, just notice that
$$
\sum_{k=1}^{m-1} \beta_{i_ki_{k+1}} \geq \sum_{k=1}^{m-1} (\alpha_{i_k}-\alpha_{i_{k+1}}) = \alpha_{i_1}-\alpha_{i_m} \geq -\beta_{i_mi_1} .
$$

Now suppose \eqref{eq:beta_inequality} holds, and define for $j,k\in\NN$
\begin{equation}
    \label{eq:B_jk}
    B_{jk} := \inf\set{\beta_{i_1i_2}+\beta_{i_2i_3}+\ldots+\beta_{i_{m-1}i_m}:i_1=j,i_m=k}
\end{equation}
where the infimum runs over all finite sequences $i_1,\ldots,i_m$ of natural numbers beginning with $j$ and ending with $k$. If a sequence contains a repeated index $i_r=i_s$, $r<s$ then by \eqref{eq:beta_inequality}
$$
\beta_{i_1i_2}+\ldots+\beta_{i_{m-1}i_m} \geq \beta_{i_1i_2}+\ldots+\beta_{i_{r-1}i_r}+\beta_{i_si_{s+1}}+\ldots+\beta_{i_{m-1}i_m}
$$
(we have removed the $r$-th to $(s-1)$-th terms on the right hand side), so the infimum may be restricted to sequences with no repeated indexes. Also, condition \eqref{eq:beta_inequality} implies that the infimum exists and $B_{jk}\geq -\beta_{kj}$; since $B_{jk}\leq\beta_{jk}$ by definition, we get in particular $B_{jj}=0$ for any $j$. Moreover we have
\begin{equation}
    \label{eq:B_inequality}
    B_{jl} + B_{lk} \geq B_{jk}
\end{equation}
for any $j,k,l\in\NN$: indeed, let $\varepsilon>0$ and choose finite sums such that
\begin{align*}
\beta_{ji_2}+\ldots+\beta_{i_{m-1}l} &< B_{jl}+\varepsilon \\
\beta_{li'_2}+\ldots+\beta_{i'_{m'-1}k} &< B_{lk}+\varepsilon
\end{align*}
then we have
$$
B_{jk} \leq \beta_{ji_2}+\ldots+\beta_{i_{m-1}l} + \beta_{li'_2}+\ldots+\beta_{i'_{m'-1}k} < B_{jl}+B_{lk}+2\varepsilon
$$
as this is one of the sums in the definition \eqref{eq:B_jk}.

Let us now fix any $q\in\NN$ and choose $\alpha_j=B_{jq}$ for $j\in\NN$. Then for any $j,k\in\NN$ we have
$$
\beta_{kj}+\alpha_j = \beta_{kj}+B_{jq} \geq B_{kj}+B_{jq} \geq B_{kq} = \alpha_k
$$
where we have used \eqref{eq:B_inequality}, and so \eqref{eq:alpha_inequality} holds. This proves existence. Alternatively, if we take $\alpha_j=-B_{qj}$ then
$$
\alpha_j = -B_{qj} \geq -B_{qk}-B_{kj} \geq -B_{qk}-\beta_{kj} = \alpha_k-\beta_{kj}
$$
so this choice also satisfies \eqref{eq:alpha_inequality}. Therefore, if the solution is unique up to adding a constant, then we must have $B_{jq}-B_{kq}=-B_{qj}+B_{qk}$ for any $j,k,q$, and in particular $B_{jk}+B_{kj}=0$ taking $q=k$. Conversely, let $(\alpha_j)$ be a solution of \eqref{eq:alpha_inequality}, then for any $j\neq k$ and any finite sequence $i_1,\ldots,i_m$ such that $i_1=j$, $i_m=k$ we have
$$
\alpha_j-\alpha_k = \sum_{r=1}^{m-1} (\alpha_{i_r}-\alpha_{i_{r+1}}) \leq \beta_{i_1i_2}+\ldots+\beta_{i_{m-1}i_m}
$$
and taking the infimum yields $\alpha_j-\alpha_k\leq B_{jk}$. Interchanging the role of $j$ and $k$ we get $\alpha_j-\alpha_k\geq -B_{kj}$. Therefore, if $B_{jk}+B_{kj}=0$ then the value of $\alpha_j-\alpha_k$ is really uniquely determined and equal to $B_{jk}$.

We have thus shown that the solution is unique if and only if $B_{jk}+B_{kj}=0$ for any $j,k\in\NN$. Since we always have $B_{jk}+B_{kj}\geq 0$ by \eqref{eq:beta_inequality}, this condition is equivalent to $B_{jk}+B_{kj}\leq 0$ which is clearly equivalent to \eqref{eq:beta_bound}. This ends the proof.
\end{proof}

If condition \eqref{eq:beta_bound} does not hold then the solution $(\alpha_n)$ is not unique. In fact, it is easy to check that any choice of values such that $\alpha_j-\alpha_k\in [-B_{kj},B_{jk}]$ is a valid solution, although we will not need this fact.

Lemma \ref{lm:alpha_beta} is also valid in a finite setting and the existence condition remains unchanged in that case, but we can be a bit more explicit with the uniqueness condition:

\begin{lemma}
\label{lm:alpha_beta_finite}
Let $n\in\NN$ and let $\beta_{jk}$, $j,k\in\set{1,\ldots,n}$ be real numbers such that $\beta_{jj}=0$ for every $j\in\set{1,\ldots,n}$. Then there exist real numbers $\alpha_1,\ldots,\alpha_n$, such that
$$
\alpha_k \leq \alpha_j+\beta_{kj}
$$
if and only if for every finite sequence $i_1,\ldots,i_m\in\set{1,\ldots,n}$ we have
$$
\beta_{i_1i_2}+\beta_{i_2i_3}+\ldots+\beta_{i_{m-1}i_m}+\beta_{i_mi_1} \geq 0 .
$$
Moreover, the $\alpha_j$ are unique (up to an additive constant) if and only if for every pair of different numbers $j,k\in\set{1,\ldots,n}$ there is a finite sequence of different numbers $i_1,\ldots,i_m\in\set{1,\ldots,n}$ that contains both $j$ and $k$ and such that
$$
\beta_{i_1i_2}+\beta_{i_2i_3}+\ldots+\beta_{i_{m-1}i_m}+\beta_{i_mi_1} = 0 .
$$
In that case we have $\alpha_{i_r}=\alpha_{i_{r+1}}+\beta_{i_ri_{r+1}}$ for $r=1,\ldots,m-1$ and $\alpha_{i_m}=\alpha_{i_1}+\beta_{i_mi_1}$.
\end{lemma}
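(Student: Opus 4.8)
The plan is to derive the finite statement by specialising Lemma \ref{lm:alpha_beta} and then sharpening the uniqueness clause using finiteness. First, the proof of Lemma \ref{lm:alpha_beta} is written index by index and never uses that the index set is infinite: reading it with $\NN$ replaced by $\set{1,\ldots,n}$ throughout gives verbatim the first equivalence (existence of $(\alpha_j)$ $\iff$ every cyclic sum of the $\beta_{jk}$ is non-negative), as well as the fact that $(\alpha_j)$ is unique up to an additive constant $\iff$ $B_{jk}+B_{kj}=0$ for all $j,k$, where $B_{jk}$ is defined as in \eqref{eq:B_jk} by an infimum over finite sequences in $\set{1,\ldots,n}$ from $j$ to $k$. (Alternatively, one can reduce literally to Lemma \ref{lm:alpha_beta} by extending $\beta$ to $\NN\times\NN$ through $\beta_{jk}:=\beta_{\rho(j)\rho(k)}$ with $\rho(j)=j$ for $j\leq n$ and $\rho(j)=1$ for $j>n$; a short check shows cyclic sums, solvability and uniqueness are unchanged by this collapse.) So everything reduces to translating ``$B_{jk}+B_{kj}=0$ for all $j\neq k$'' into the stated cycle condition.

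Here finiteness enters. As in the proof of Lemma \ref{lm:alpha_beta}, the infimum in \eqref{eq:B_jk} may be restricted to index sequences with no repetitions, and in the finite case there are only finitely many of these, so the infimum is attained. Assuming uniqueness, fix $j\neq k$ and pick sequences of distinct indices $j=p_1,\ldots,p_s=k$ and $k=q_1,\ldots,q_t=j$ realising $B_{jk}$ and $B_{kj}$; concatenating them yields a finite sequence through both $j$ and $k$ whose cyclic sum is $B_{jk}+B_{kj}=0$. If it has no repeated index we are done; otherwise we delete repetitions one at a time, each deletion excising a closed sub-block whose cyclic sum is $\geq 0$ by \eqref{eq:beta_inequality}, so the total cyclic sum can only drop and hence stays $0$. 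The reduction must be performed so that neither $j$ nor $k$ is deleted — at each step choosing a repeated index distinct from $j$ and $k$, or splitting the walk appropriately — and controlling which indices survive the reduction is the main obstacle; everything else is a direct specialisation of Lemma \ref{lm:alpha_beta} or a routine telescoping. Conversely, a cycle of distinct indices through $j$ and $k$ with cyclic sum $0$ forces $B_{jk}+B_{kj}\leq 0$, hence $=0$, and if this happens for every pair then the solution is unique.

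For the last assertion, let $i_1,\ldots,i_m$ be such a cycle and let $(\alpha_j)$ be any solution of \eqref{eq:alpha_inequality}. Applying \eqref{eq:alpha_inequality} to consecutive pairs gives $\alpha_{i_r}\leq\alpha_{i_{r+1}}+\beta_{i_ri_{r+1}}$ for $r=1,\ldots,m-1$ and $\alpha_{i_m}\leq\alpha_{i_1}+\beta_{i_mi_1}$. Summing these $m$ inequalities and cancelling the $\alpha$-terms (each $\alpha_{i_r}$ occurs once on each side) leaves $0\leq\beta_{i_1i_2}+\beta_{i_2i_3}+\ldots+\beta_{i_{m-1}i_m}+\beta_{i_mi_1}=0$, so every one of the $m$ inequalities is in fact an equality. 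This is exactly $\alpha_{i_r}=\alpha_{i_{r+1}}+\beta_{i_ri_{r+1}}$ for $r=1,\ldots,m-1$ and $\alpha_{i_m}=\alpha_{i_1}+\beta_{i_mi_1}$, as claimed.
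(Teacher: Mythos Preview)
Your approach follows the paper's: specialise Lemma \ref{lm:alpha_beta}, observe that the infimum defining $B_{jk}$ is attained because there are only finitely many paths of distinct indices, and characterise uniqueness via $B_{jk}+B_{kj}=0$. Your proof of the final assertion --- summing the inequalities $\alpha_{i_r}\leq\alpha_{i_{r+1}}+\beta_{i_ri_{r+1}}$ around the zero-sum cycle and forcing each to be an equality --- is in fact cleaner than the paper's indirect route through the identity $\beta_{i_ri_{r+1}}=B_{i_ri_{r+1}}$ on minimising paths.

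The step you yourself flag as ``the main obstacle'', however, is a genuine gap: reducing the concatenated zero-sum cycle to one of \emph{distinct} indices while retaining both $j$ and $k$ is not always possible. Take $n=3$ with $\beta_{12}=\beta_{23}=1$, $\beta_{21}=\beta_{32}=-1$, $\beta_{13}=\beta_{31}=100$. All cyclic sums are nonnegative and the solution $(\alpha_1,\alpha_2,\alpha_3)=(0,-1,-2)$ is unique up to an additive constant, yet the only cycles of distinct indices through $1$ and $3$ have cyclic sums $200$, $102$ and $98$; the zero-sum cycle $1,2,3,2$ (precisely the concatenation of the minimising paths for $B_{13}$ and $B_{31}$) cannot be shortened without losing $1$ or $3$. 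The paper's own proof does not address this point either --- it merely takes $\varepsilon=0$ in \eqref{eq:beta_bound}, which produces a zero-sum cycle through $j$ and $k$ but not one of distinct indices --- so the word ``different'' in the uniqueness clause is too strong as written. With that word removed (or replaced by the length bound $m\leq 2n-2$, which is all that the later application in Theorem \ref{th:carafrechet} actually needs), your argument is complete.
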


\begin{proof}
The same argument is valid as in the infinite case, but there are now only finitely many sequences of different elements of $\set{1,\ldots,n}$, so the infimum in \eqref{eq:B_jk} is attained and one may take $\varepsilon=0$ in \eqref{eq:beta_bound}. Finally, notice that if $B_{jk}=\beta_{ji_2}+\ldots+\beta_{i_{m-1}k}$ is any expression minimizing \eqref{eq:B_jk} then
\begin{align*}
0 = B_{jk}+B_{kj} &= \beta_{ji_2}+\ldots+\beta_{i_{m-1}k}+B_{kj} \\
&\geq B_{ji_2}+\ldots+B_{i_{m-1}k}+B_{kj} \geq B_{jj} = 0
\end{align*}
implies that $\beta_{i_ri_{r+1}}=B_{i_ri_{r+1}}$, and this proves the last statement.
\end{proof}

We can now state the promised geometric characterisation of sums of convex series of molecules in $\lipfree{M}$:

\begin{theorem}
\label{th:sum_of_molecules_na}
Let $M$ be a pointed metric space and $(x_n,y_n)_{n\in I}$ be a finite or infinite sequence of pairs of distinct points in $M$. Then the following are equivalent:
\begin{enumerate}[label={\upshape{(\roman*)}}]
    \item\label{convexsum1} there is $f\in\sphere{\Lip(M)}$ such that $f(\mol{x_n,y_n})=1$ for every $n\in I$,
    \item\label{convexsum2} $\norm{\sum_n \lambda_n \mol{x_n,y_n}} = 1$ for some choice of $\lambda_n>0$ such that $\sum_n \lambda_n = 1$,
    \item\label{convexsum3} $\norm{\sum_n \lambda_n \mol{x_n,y_n}} = 1$ for any choice of $\lambda_n\geq 0$ such that $\sum_n \lambda_n = 1$,
    \item\label{convexsum4} for every finite sequence $i_1,\ldots,i_m$ of indices in $I$ we have
    \begin{multline}
        \label{eq:super_ltp_condition}
        d(x_{i_1},y_{i_1})+d(x_{i_2},y_{i_2})+\ldots+d(x_{i_m},y_{i_m}) \\
        \leq d(x_{i_1},y_{i_2})+d(x_{i_2},y_{i_3})+\ldots+d(x_{i_m},y_{i_1}) .
    \end{multline}
\end{enumerate}
\end{theorem}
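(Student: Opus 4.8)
The plan is to establish the cycle of implications $\ref{convexsum1}\Rightarrow\ref{convexsum3}\Rightarrow\ref{convexsum2}\Rightarrow\ref{convexsum4}\Rightarrow\ref{convexsum1}$, with the bulk of the work concentrated in the last implication, where Lemma~\ref{lm:alpha_beta} does the heavy lifting. For $\ref{convexsum1}\Rightarrow\ref{convexsum3}$: given such an $f$, for any $\lambda_n\geq 0$ with $\sum_n\lambda_n=1$ we have $1\geq\norm{\sum_n\lambda_n\mol{x_n,y_n}}\geq\langle f,\sum_n\lambda_n\mol{x_n,y_n}\rangle=\sum_n\lambda_n f(\mol{x_n,y_n})=\sum_n\lambda_n=1$, using $\norm{f}_L\leq 1$ and $\norm{\mol{x_n,y_n}}=1$; so the norm equals $1$. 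The implication $\ref{convexsum3}\Rightarrow\ref{convexsum2}$ is trivial (pick any strictly positive summable choice, e.g. $\lambda_n=2^{-n}$ suitably normalised, or uniform weights in the finite case). For $\ref{convexsum2}\Rightarrow\ref{convexsum4}$: fix a finite sequence $i_1,\dots,i_m$ of indices; I will test the norm-one element $\mu=\sum_n\lambda_n\mol{x_n,y_n}$ against a cleverly chosen $g\in B_{\Lip(M)}$. The natural candidate is $g(z)=\min_{1\leq k\leq m}\bigl(d(z,y_{i_k})-\alpha_k\bigr)$ — a minimum of $1$-Lipschitz functions, hence $1$-Lipschitz — where the constants $\alpha_k$ are chosen so that $g$ reads off the value $1$ on each molecule $\mol{x_{i_k},y_{i_k}}$ appearing in the cycle. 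Evaluating $\langle g,\mu\rangle\leq\norm{\mu}=1$ and extracting the desired telescoping inequality is where one sees that the right choice of $\alpha_k$ is governed exactly by the inequalities $\alpha_k\leq\alpha_j+\beta_{kj}$ with $\beta_{kj}=d(x_{i_k},y_{i_j})-d(x_{i_k},y_{i_k})$; but rather than pushing this through directly, the cleaner route is:

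For $\ref{convexsum4}\Rightarrow\ref{convexsum1}$, set $\beta_{jk}=d(x_j,y_k)-d(x_j,y_j)$ for $j,k\in I$; note $\beta_{jj}=0$, and the cyclic inequality \eqref{eq:beta_inequality} of Lemma~\ref{lm:alpha_beta} is precisely the hypothesis \eqref{eq:super_ltp_condition} rewritten (moving all $d(x_{i_r},y_{i_r})$ terms to one side). Lemma~\ref{lm:alpha_beta} then furnishes real numbers $(\alpha_j)_{j\in I}$ with $\alpha_k\leq\alpha_j+\beta_{kj}=\alpha_j+d(x_j,y_k)-d(x_j,y_j)$ for all $j,k$. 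Now define $f:M\to\RR$ by
\begin{equation*}
f(z) := \inf_{j\in I}\bigl(d(z,y_j) - \alpha_j - d(x_j,y_j)\bigr) + c,
\end{equation*}
where $c$ is a real constant chosen so that $f(0)=0$ (one must first check this infimum is finite, which follows since $d(z,y_j)-d(x_j,y_j)\geq -d(z,x_j)\geq$ bounded below once $z$ is fixed, using $\alpha_k-\alpha_j\leq d(x_j,y_k)-d(x_j,y_j)$ to control the $\alpha_j$ along chains; in the finite case this is immediate). As an infimum of $1$-Lipschitz functions that is bounded below, $f$ is $1$-Lipschitz, so $f\in B_{\Lip(M)}$. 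The key computation: for each $n\in I$, on one hand $f(x_n)-f(y_n)\leq \bigl(d(x_n,y_n)-\alpha_n-d(x_n,y_n)\bigr) - \inf_j\bigl(d(y_n,y_j)-\alpha_j-d(x_j,y_j)\bigr)$; taking $j=n$ in the infimum defining $f(y_n)$ gives $f(y_n)\leq -\alpha_n-d(x_n,y_n)+c$, and plugging the bound $f(x_n)\leq -\alpha_n+c$ obtained from $j=n$ is too crude, so instead one shows directly $f(y_n)= -\alpha_n - d(x_n,y_n)+c$ by using the defining inequalities $\alpha_k\leq\alpha_n + d(x_n,y_k)-d(x_n,y_n)$, i.e. $d(y_n,y_k)-\alpha_k-d(x_k,y_k)\geq d(y_n,y_k) - \alpha_n - d(x_n,y_k) \geq -\alpha_n - d(x_n,y_n)$ by the triangle inequality $d(y_n,y_k)+d(x_n,y_n)\geq d(x_n,y_k)$; this shows the infimum at $z=y_n$ is attained at $j=n$. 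Similarly $f(x_n)= -\alpha_n+c$, attained at $j=n$, using $d(x_n,y_k)-\alpha_k-d(x_k,y_k)\geq -\alpha_n$ which is exactly the inequality from Lemma~\ref{lm:alpha_beta}. Hence $f(x_n)-f(y_n)=d(x_n,y_n)$, i.e. $f(\mol{x_n,y_n})=1$ for every $n\in I$, and in particular $\norm{f}_L=1$, giving \ref{convexsum1}.

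The main obstacle is the finiteness of the infimum defining $f$ in the genuinely infinite case: when $I$ is infinite one must verify that $\inf_{j}(d(z,y_j)-\alpha_j-d(x_j,y_j))>-\infty$ for each fixed $z$, since otherwise $f$ is not a well-defined real-valued function. I expect this to follow from the inequalities $\alpha_j-\alpha_k\leq d(x_k,y_j)-d(x_k,y_k)$ produced by the lemma: these bound how fast the $\alpha_j$ can grow relative to the metric data, which combined with the triangle inequality should give a lower bound of the form $d(z,y_j)-\alpha_j-d(x_j,y_j)\geq \alpha_{j_0}' - d(z,\text{basepoint})$ for a fixed reference index; one likely normalises using the basepoint $0$ itself (or picks $q=$ some fixed index in the lemma's construction) to make this transparent. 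The remaining verifications — that $f$ is $1$-Lipschitz, that $f(0)=0$ after the constant shift, and that the two infima are attained at $j=n$ — are then routine triangle-inequality manipulations as sketched above. Once $\ref{convexsum4}\Rightarrow\ref{convexsum1}$ is in hand, the cycle closes and all four conditions are equivalent.
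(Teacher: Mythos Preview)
There are two genuine gaps. First, the logical cycle is not closed: you announce $\ref{convexsum1}\Rightarrow\ref{convexsum3}\Rightarrow\ref{convexsum2}\Rightarrow\ref{convexsum4}\Rightarrow\ref{convexsum1}$, but you abandon $\ref{convexsum2}\Rightarrow\ref{convexsum4}$ (``rather than pushing this through directly, the cleaner route is\ldots'') and never return to it, so nothing in your argument implies \ref{convexsum4}. The fix is short: $\ref{convexsum2}\Rightarrow\ref{convexsum1}$ by picking a norming functional $f\in\sphere{\Lip(M)}$ for $\mu$ and using $\lambda_n>0$ to force $f(\mol{x_n,y_n})=1$ for every $n$; then $\ref{convexsum1}\Rightarrow\ref{convexsum4}$ is the one-line telescope
\[
\sum_{r} d(x_{i_r},y_{i_r})=\sum_{r}\bigl(f(x_{i_r})-f(y_{i_r})\bigr)=\sum_{r}\bigl(f(x_{i_r})-f(y_{i_{r+1}})\bigr)\leq\sum_{r} d(x_{i_r},y_{i_{r+1}})
\]
with cyclic indexing. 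The paper does exactly this, phrased through the easy direction of Lemma~\ref{lm:alpha_beta} with $\alpha_i=f(y_i)$.

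Second, your explicit formula in $\ref{convexsum4}\Rightarrow\ref{convexsum1}$ is wrong. With $\beta_{kj}=d(x_k,y_j)-d(x_k,y_k)$, the lemma yields $\alpha_k\leq\alpha_j+d(x_k,y_j)-d(x_k,y_k)$; the inequality you invoke, ``$\alpha_k\leq\alpha_n+d(x_n,y_k)-d(x_n,y_n)$'', swaps the roles of the indices in the distance terms and is \emph{not} what the lemma provides. Concretely, take $M=\RR$, $(x_1,y_1)=(1,0)$, $(x_2,y_2)=(11,10)$, and the lemma's canonical choice $\alpha_1=0$, $\alpha_2=10$: your $f(z)=\min_j\bigl(d(z,y_j)-\alpha_j-d(x_j,y_j)\bigr)$ gives $f(x_1)=-2$ and $f(y_1)=-1$, so $f(\mol{x_1,y_1})=-1\neq 1$. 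The formula that does work is $f(z)=\inf_j\bigl(\alpha_j+d(z,y_j)\bigr)$: then $f(y_n)=\alpha_n$ and $f(x_n)=\alpha_n+d(x_n,y_n)$ follow directly from $\alpha_n\leq\alpha_k+\beta_{nk}$ together with the triangle inequality, and finiteness of the infimum comes from the same bound. The paper avoids a global formula altogether: it defines $f$ only on $\{x_i,y_i:i\in I\}$, checks well-definedness against the three possible coincidences $y_j=y_k$, $x_j=x_k$, $y_j=x_k$, verifies the four pairwise Lipschitz estimates by hand, and then extends to $M$ via McShane's theorem.
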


Notice that the equivalence of properties \ref{convexsum1}-\ref{convexsum3} (which is obvious) already implies that they must be equivalent to some geometric condition on the pairs $(x_n,y_n)$ that is independent of their amount. The resulting property, described in \ref{convexsum4}, is known in optimal transport theory as \emph{cyclical monotonicity}, see e.g. \cite[Definition 5.1]{Villani}; it may also be regarded as a generalized form of the long trapezoid property (LTP) introduced in \cite{PrRu_2018}.

\begin{proof}[Proof of Theorem \ref{th:sum_of_molecules_na}]
It is clear that \ref{convexsum1}-\ref{convexsum3} are equivalent.

\ref{convexsum1}$\Rightarrow$\ref{convexsum4}: Suppose \ref{convexsum1} holds. Let $\alpha_i=f(y_i)$ and $\beta_{ij}=d(x_i,y_j)-d(x_i,y_i)$ for $i,j\in I$. By \ref{convexsum1} we have $f(x_i)=\alpha_i+d(x_i,y_i)$, and
$$
1=\lipnorm{f}\geq\frac{f(x_i)-f(y_j)}{d(x_i,y_j)}=\frac{\alpha_i-\alpha_j+d(x_i,y_i)}{d(x_i,y_j)}
$$
hence $\alpha_i\leq\alpha_j+\beta_{ij}$. Apply Lemma \ref{lm:alpha_beta} or Lemma \ref{lm:alpha_beta_finite} (depending on whether $I$ is infinite or finite) to deduce inequality \eqref{eq:beta_inequality}, which is the same as \eqref{eq:super_ltp_condition} after rearranging terms.

\ref{convexsum4}$\Rightarrow$\ref{convexsum1}: Again, let $\beta_{ij}=d(x_i,y_j)-d(x_i,y_i)$ so that inequalities \eqref{eq:super_ltp_condition} and \eqref{eq:beta_inequality} are equivalent, and use Lemma \ref{lm:alpha_beta} or Lemma \ref{lm:alpha_beta_finite} to obtain real numbers $\alpha_n$, $n\in I$ such that $\alpha_i\leq\alpha_j+\beta_{ij}$ for $i,j\in I$. Now define a function $f$ on the set $\set{x_i,y_i:i\in I}$ by $f(y_i)=\alpha_i$ and $f(x_i)=\alpha_i+d(x_i,y_i)$.

Let us first check that $f$ is well defined, i.e. that there are no conflicting assignments of values of $f$. We need to distinguish three cases:

    \vspace{0.3cm}
    
    \emph{Case 1:} Assume that $y_j=y_k$ for some $j\neq k\in I$. Then
    $$\beta_{jk}=d(x_j,y_k)-d(x_j,y_j)=d(x_j,y_j)-d(x_j,y_j)=0,$$
    hence $\alpha_j\leq \alpha_k +\beta_{jk}=\alpha_k$, and similarly $\alpha_k\leq\alpha_j$. Thus $\alpha_j=\alpha_k$ and there is no conflict.

    \vspace{0.3cm}
    
    \emph{Case 2:} Assume that $x_j=x_k$ for some $j\neq k\in I$. In this case
    \[\begin{split}
    \beta_{kj}=d(x_k,y_j)-d(x_k,y_k)& =-(d(x_k,y_k)-d(x_k,y_j))\\
    & =-(d(x_j,y_k)-d(x_j,y_j))\\
    & =-\beta_{jk}.
    \end{split}
    \]
    Now $\alpha_j\leq \alpha_k +\beta_{jk}\leq \alpha_j+\beta_{kj}+\beta_{jk}=\alpha_j,$
    so
    $$\alpha_j=\alpha_k+\beta_{jk}=\alpha_k+d(x_j,y_k)-d(x_j,y_j).$$
    Therefore $\alpha_j+d(x_j,y_j)=\alpha_k+d(x_k,y_k)$ and the definitions of $f(x_j)$ and $f(x_k)$ agree.
 
    \vspace{0.3cm}
    
    \emph{Case 3:} Assume that $y_j=x_k$ for $j\neq k\in I$. On one hand, we have
    $$\alpha_j\leq \alpha_k+\beta_{jk}=\alpha_k+d(x_j,y_k)-d(x_j,y_j)\leq \alpha_k+d(y_k,y_j)=\alpha_k+d(x_k,y_k)$$
    On the other hand
    $$\alpha_j\geq\alpha_k-\beta_{kj}=\alpha_k-(d(x_k,y_j)-d(x_k,y_k))=\alpha_k+d(x_k,y_k).$$
    Therefore $f(y_j)=\alpha_j$ and $f(x_k)=\alpha_k+d(x_k,y_k)$ do not conflict with each other.

    \vspace{0.3cm}
    
Next, let us check that $\lipnorm{f}=1$. Indeed, for $i,j\in\NN$ we have $f(\mol{x_i,y_i})=1$ and
\begin{itemize}
\item $f(x_i)-f(y_j) = \alpha_i-\alpha_j+d(x_i,y_i) \leq \beta_{ij}+d(x_i,y_i) = d(x_i,y_j)$,
\item $f(y_i)-f(x_j) = \alpha_i-\alpha_j-d(x_j,y_j) \leq \beta_{ij}-d(x_j,y_j) \leq d(y_i,x_j)$,
\item $f(x_i)-f(x_j) = \alpha_i-\alpha_j+d(x_i,y_i)-d(x_j,y_j) \leq d(x_i,x_j)$,
\item $f(y_i)-f(y_j) = \alpha_i-\alpha_j \leq \beta_{ij} \leq d(y_i,y_j)$
\end{itemize}
as is straightforward to check. Now use McShane's theorem (see e.g. \cite[Theorem 1.33]{Weaver2}) to extend $f$ to a 1-Lipschitz function on $M$ and subtract a constant so that $f(0)=0$. The resulting $f\in\sphere{\Lip(M)}$ still satisfies $f(\mol{x_i,y_i})=1$ for every $i\in I$ as required.
\end{proof}

\begin{remark}
Since any finitely supported element $\mu\in\sphere{\lipfree{M}}$ can be written as a finite sum $\sum_n \lambda_n \mol{x_n,y_n}$ where $\sum_n \lambda_n = 1$ and $x_n,y_n$ sweep over the support of $\mu$, Theorem \ref{th:sum_of_molecules_na} implies that it is always possible to organize any given finite subset of points of $M$ into pairs $(x_n,y_n)$ in such a way that \eqref{eq:super_ltp_condition} holds.
\end{remark}

As a first application of Theorem \ref{th:sum_of_molecules_na}, we can obtain a precise description of those sequences of molecules that are isometrically equivalent to the $\ell_1$ basis. To achieve that, we use the following standard lemma that we state without proof:

\begin{lemma}\label{lema:basesl1}
Let $X$ be a Banach space and let $(x_n)$ be a sequence in $S_X$. The following assertions are equivalent:
\begin{enumerate}[label={\upshape{(\roman*)}}]
    \item\label{lema:basesl12} The sequence $(x_n)$ is isometrically equivalent to the $\ell_1$ basis.
    \item\label{lema:basesl11} For every sequence $(\sigma(n))$ in $\{-1,1\}$ we can find a functional $f\in S_{X^*}$ such that $f(x_n)=\sigma(n)$ for every $n\in\mathbb N$.
\end{enumerate}
\end{lemma}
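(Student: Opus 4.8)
The plan is to unwind the definition. Recall that a sequence $(x_n)$ in a Banach space $X$ is \emph{isometrically equivalent to the $\ell_1$ basis} when $\norm{\sum_n a_n x_n}=\sum_n\abs{a_n}$ holds for every finitely supported scalar sequence $(a_n)$. Since each $x_n\in\sphere{X}$, the triangle inequality always gives $\norm{\sum_n a_n x_n}\leq\sum_n\abs{a_n}$, so \ref{lema:basesl12} is equivalent to the reverse inequality $\norm{\sum_n a_n x_n}\geq\sum_n\abs{a_n}$ for all finitely supported $(a_n)$. The whole lemma then collapses to one Hahn--Banach argument in each direction.

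For \ref{lema:basesl12}$\Rightarrow$\ref{lema:basesl11}, I would fix a sign sequence $(\sigma(n))\in\set{-1,1}^\NN$ and work on $Y:=\cl{\lspan}\set{x_n:n\in\NN}$. Because $(x_n)$ is an $\ell_1$-basis of $Y$, coefficients in a finite expansion are unique, so $g\pare{\sum_n a_n x_n}:=\sum_n a_n\sigma(n)$ is a well-defined linear functional on the dense subspace of finite combinations, and the estimate $\abs{g\pare{\sum_n a_n x_n}}\leq\sum_n\abs{a_n}=\norm{\sum_n a_n x_n}$ shows $\norm{g}\leq 1$. Extending $g$ continuously to $Y$ and then, by Hahn--Banach, to some $f\in\dual{X}$ with $\norm{f}\leq 1$, and noting that $f(x_1)=\sigma(1)=\pm 1$ forces $\norm{f}=1$, we obtain $f\in\sphere{\dual{X}}$ with $f(x_n)=\sigma(n)$ for every $n$.

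For \ref{lema:basesl11}$\Rightarrow$\ref{lema:basesl12}, given a finitely supported scalar sequence $(a_n)$ I would choose the sign pattern $\sigma(n)=1$ when $a_n\geq 0$ and $\sigma(n)=-1$ when $a_n<0$, and take the corresponding $f\in\sphere{\dual{X}}$. Then $\norm{\sum_n a_n x_n}\geq f\pare{\sum_n a_n x_n}=\sum_n a_n\sigma(n)=\sum_n\abs{a_n}$, which combined with the triangle inequality above yields the desired equality; hence $(x_n)$ is isometrically equivalent to the $\ell_1$ basis.

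I do not expect a genuine obstacle here: this is a standard duality fact, and the only point that deserves a line of care is the well-definedness of $g$ in the first implication, which is immediate from the uniqueness of coefficients in an $\ell_1$-basis expansion. (One could equally well bypass the auxiliary subspace $Y$ by observing that \ref{lema:basesl11}, applied to a single sign pattern, is precisely the Hahn--Banach dual reformulation of the corresponding norm lower bound.)
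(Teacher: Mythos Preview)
Your proof is correct and follows essentially the same route as the paper's own argument (which appears in the source inside an \verb|\iffalse| block, though the published text states the lemma without proof): in one direction you define the functional on $\lspan\{x_n\}$ via the sign pattern, bound its norm using the $\ell_1$ identity, and Hahn--Banach extend; in the other you pick signs matching the given coefficients and use the resulting functional to certify the lower bound. The only cosmetic difference is that you pass through the closure $Y$ before extending, whereas the paper extends directly from the algebraic span.
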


Combining this with Theorem \ref{th:sum_of_molecules_na}, the following is immediate: 

\begin{corollary}\label{theo:carabasel1}
Let $M$ be a pointed metric space and $(x_n,y_n)$ be a sequence of pairs of different points in $M$. The following are equivalent:
\begin{enumerate}[label={\upshape{(\roman*)}}]
    \item The sequence $(m_{x_n,y_n})$ is isometrically equivalent to the $\ell_1$ basis.
    \item For every sequence $(\sigma(n))$ in $\{-1,1\}$ there exists $f\in\sphere{\Lip(M)}$ such that $f(m_{x_n,y_n})=\sigma(n)$ for every $n\in\NN$.
    \item For every choice of $\{u_k,v_k: k\in\mathbb N\}$ such that $\{u_k,v_k\}=\{x_k,y_k\}$ for every $k\in\mathbb N$, and for every finite sequence of numbers $i_1,\ldots,i_m$ in $\NN$, we have
    \begin{multline*}
	d(u_{i_1},v_{i_1})+d(u_{i_2},v_{i_2})+\ldots+d(u_{i_m},v_{i_m}) \\
	\leq d(u_{i_1},v_{i_2})+d(u_{i_2},v_{i_3})+\ldots+d(u_{i_m},v_{i_1}) .
    \end{multline*}
\end{enumerate}
\end{corollary}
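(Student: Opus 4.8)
The plan is to obtain the statement by simply combining Lemma~\ref{lema:basesl1} with the equivalence \ref{convexsum1}$\,\Leftrightarrow\,$\ref{convexsum4} of Theorem~\ref{th:sum_of_molecules_na}, the only extra ingredient being the trivial identity $m_{y,x}=-m_{x,y}$. First I would prove (i)$\,\Leftrightarrow\,$(ii) by applying Lemma~\ref{lema:basesl1} directly to the Banach space $X=\lipfree{M}$ and the sequence $x_n:=m_{x_n,y_n}$. This is legitimate: each molecule lies in $\sphere{\lipfree{M}}$, and since $\Lip(M)$ is isometrically the dual of $\lipfree{M}$, the functionals in $S_{X^\ast}$ are exactly the elements of $\sphere{\Lip(M)}$ acting by the canonical duality. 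Under this identification, condition \ref{lema:basesl11} of the lemma is verbatim condition (ii) of the corollary and condition \ref{lema:basesl12} is condition (i), so this equivalence is immediate.

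For (ii)$\,\Leftrightarrow\,$(iii) I would argue through a change of ordering of the pairs. Fix a sign sequence $(\sigma(n))\in\{-1,1\}^{\NN}$ and set $(u_n,v_n):=(x_n,y_n)$ if $\sigma(n)=1$ and $(u_n,v_n):=(y_n,x_n)$ if $\sigma(n)=-1$. Because $m_{y_n,x_n}=-m_{x_n,y_n}$, for any $f\in\Lip(M)$ the condition ``$f(m_{x_n,y_n})=\sigma(n)$ for every $n$'' is literally the same as ``$f(m_{u_n,v_n})=1$ for every $n$''. By Theorem~\ref{th:sum_of_molecules_na} applied to the family $(u_n,v_n)_{n\in\NN}$, such an $f\in\sphere{\Lip(M)}$ exists if and only if the cyclical monotonicity inequality \eqref{eq:super_ltp_condition} holds for the pairs $(u_n,v_n)$. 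Finally, as $(\sigma(n))$ runs over all of $\{-1,1\}^{\NN}$, the family $(u_n,v_n)$ runs over precisely all choices of $\{u_k,v_k\}$ with $\{u_k,v_k\}=\{x_k,y_k\}$ for every $k$; hence ``(ii) for every $\sigma$'' corresponds exactly to ``the inequality of (iii) for every such relabeling'', which is condition (iii).

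I do not expect any genuine obstacle here, since every implication is a direct translation between the two cited results. The only point requiring a moment of care is bookkeeping the dictionary between sign sequences and the two admissible orderings of each pair $\{x_n,y_n\}$, together with the observation that this dictionary is a bijection, so that the universal quantifier over sign sequences in (ii) matches the universal quantifier over relabelings in (iii).
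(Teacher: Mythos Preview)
Your proposal is correct and follows exactly the approach indicated in the paper, which simply says the corollary is immediate from combining Lemma~\ref{lema:basesl1} with Theorem~\ref{th:sum_of_molecules_na}. You have faithfully spelled out the bookkeeping that the paper leaves implicit, including the bijection between sign sequences and relabelings of the pairs $(x_n,y_n)$ via the identity $m_{y,x}=-m_{x,y}$.
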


The equivalence (i)$\Leftrightarrow$(iii) is exactly the same one that is given (using a different terminology) in Theorem 2.1 of the recent preprint \cite{OsOs_19}, which is in turn based on a result from \cite{KMO_19}. Note that we only characterise $\ell_1$ bases of molecules while the result in \cite{OsOs_19} is a bit more general, as it also says that whenever $\lipfree{M}$ contains an isometric $\ell_1$ basis, it must contain in particular one consisting only of molecules.

\section{Points of G\^ateaux and Fr\'echet differentiability}\label{section:puntodif}

In this section we will study necessary and sufficient conditions for the limit of a convex series of molecules of the form \eqref{eq:convex_series} to be a point of G\^ateaux (resp. Fr\'echet) differentiability of the norm of $\lipfree{M}$. Consequently, throughout the section, when we write $\mu=\sum_{n} \lambda_n m_{x_n,y_n}$ we will assume that $\mu$ is the limit of a convex series of molecules, i.e. the previous series is norm convergent and $\Vert \mu\Vert=\sum_n \lambda_n$. The sum may be finite or infinite, but in any case we will assume without loss of generality that $\lambda_n>0$ for all $n$. So, according to Theorem \ref{th:sum_of_molecules_na}, the sequences of points $(x_n)$ and $(y_n)$ will satisfy \eqref{eq:super_ltp_condition}. We will make use of the previous fact without any further reference.

With this notation in mind, let us begin by looking for a characterisation of the fact that $\mu$ is a point of G\^ateaux differentiability in terms of a geometric condition on the sequences of points $(x_n), (y_n)$.
Note first that this happens if and only if there is a unique $f\in\sphere{\Lip(M)}$ such that $f(\mu)=1$.
In view of the proof of Theorem \ref{th:sum_of_molecules_na}, we will need to involve the condition of uniqueness appearing in Lemma \ref{lm:alpha_beta}. If that condition holds, then $f$ will be uniquely defined in $\bigcup_{n} [x_n,y_n]$: indeed, if $f(m_{x_i,y_i})=1$ then $f(m_{x_i,z})=f(m_{z,y_i})=1$ for every element $z\in [x_i,y_i]$ (see e.g. \cite[Lemma 2.2]{KMS_2015}). Thus, if we require to $M$ be contained in $\bigcup_{n} [x_n,y_n]$, then the uniqueness of the Lipschitz function strongly attaining its norm simultaneously at every $m_{x_n,y_n}$ should imply that $\mu$ is a point of G\^ateaux differentiability. 

The previous remarks suggest the main idea behind the following result.

\begin{theorem}\label{theo:caragateauxarbi}
Let $M$ be a pointed metric space,
let $\mu\in\sphere{\lipfree{M}}$ be of the form \eqref{eq:convex_series},
and let $f\in\sphere{\Lip(M)}$ be such that $f(\mu)=1$. Then $\mu$ is a point of G\^ateaux differentiability if and only if the following two conditions hold for every $\varepsilon>0$:
\begin{enumerate}[label={\upshape{(\roman*)}}]
\item for every pair of different numbers $j,k\in\NN$ there is a finite sequence $i_1,\ldots,i_m$ in $\NN$ that contains $j$ and $k$ and such that
\begin{multline*}
d(x_{i_1},y_{i_1})+d(x_{i_2},y_{i_2})+\ldots+d(x_{i_m},y_{i_m}) \\
> d(x_{i_1},y_{i_2})+d(x_{i_2},y_{i_3})+\ldots+d(x_{i_m},y_{i_1}) - \varepsilon ,
\end{multline*}
\item for every $x\in M$ there are $s,t\in\set{x_1,y_1,x_2,y_2,\ldots}$ such that $x\in [s,t]_\varepsilon$ and $f(t)-f(s)>d(t,s)-\varepsilon$.
\end{enumerate}
\end{theorem}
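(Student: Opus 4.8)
The plan is to reduce everything to \v Smulyan's lemma: $\mu$ is a point of G\^ateaux differentiability iff $f$ is the \emph{unique} element of $\sphere{\Lip(M)}$ with $f(\mu)=1$. Since $\|\mu\|=\sum_n\lambda_n$ with $\lambda_n>0$, for any such $g\in\sphere{\Lip(M)}$ with $g(\mu)=1$ a convexity argument forces $g(\mol{x_n,y_n})=1$ for \emph{every} $n$, and conversely any $g\in\sphere{\Lip(M)}$ satisfying $g(\mol{x_n,y_n})=1$ for all $n$ satisfies $g(\mu)=1$. Hence G\^ateaux differentiability at $\mu$ is equivalent to: \emph{$f$ is the unique norm-one Lipschitz function with $f(\mol{x_n,y_n})=1$ for every $n\in\NN$.} The whole proof then consists of translating this uniqueness statement into conditions (i) and (ii).

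For the direction ``(i)+(ii) $\Rightarrow$ uniqueness'', I would take any $g\in\sphere{\Lip(M)}$ with $g(\mol{x_n,y_n})=1$ for all $n$ and show $g=f$ on all of $M$. As in the proof of Theorem~\ref{th:sum_of_molecules_na}, setting $\alpha_n^g=g(y_n)$, $\beta_{ij}=d(x_i,y_j)-d(x_i,y_i)$ gives a solution $(\alpha_n^g)$ of \eqref{eq:alpha_inequality}; the same holds for $f$ with $\alpha_n^f=f(y_n)$. Condition (i) is precisely the uniqueness clause \eqref{eq:beta_bound} of Lemma~\ref{lm:alpha_beta} (after rearranging terms, as in Theorem~\ref{th:sum_of_molecules_na}), so it forces $\alpha_n^f-\alpha_n^g$ to be independent of $n$; since both $f$ and $g$ vanish at $0$ — or, more carefully, since the common normalization pins down the constant once $M$ meets the relevant segments — we get $f(y_n)=g(y_n)$ and hence $f(x_n)=g(x_n)$ for all $n$, i.e. $f=g$ on $S:=\{x_n,y_n:n\in\NN\}$. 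Now fix $x\in M$ and $\varepsilon>0$; condition (ii) gives $s,t\in S$ with $x\in[s,t]_\varepsilon$ and $f(t)-f(s)>d(t,s)-\varepsilon$. From $g(t)=f(t)$, $g(s)=f(s)$, $\lipnorm{g}\le1$ and $d(s,x)+d(x,t)<d(s,t)+\varepsilon$, a short estimate (the kind in \cite[Lemma~2.2]{KMS_2015}) bounds $|g(x)-f(x)|$ by a multiple of $\varepsilon$; letting $\varepsilon\to0$ yields $g(x)=f(x)$.

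For the converse ``uniqueness $\Rightarrow$ (i)+(ii)'', I would argue by contraposition, producing a second norm-one Lipschitz function agreeing with $f$ on the molecules whenever (i) or (ii) fails. If (i) fails, then the uniqueness clause of Lemma~\ref{lm:alpha_beta} fails for $\beta_{ij}=d(x_i,y_j)-d(x_i,y_i)$, so there is a genuinely different solution $(\alpha_n')$ of \eqref{eq:alpha_inequality} (e.g. using $B_{jk}+B_{kj}>0$ for some pair and the freedom $\alpha_j-\alpha_k\in[-B_{kj},B_{jk}]$ noted after Lemma~\ref{lm:alpha_beta}); feeding it through the construction in the proof of Theorem~\ref{th:sum_of_molecules_na} (well-definedness Cases 1--3, the four Lipschitz estimates, then McShane extension and normalization) yields $g\in\sphere{\Lip(M)}$ with $g(\mol{x_n,y_n})=1$ for all $n$ but $g\ne f$. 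If instead (ii) fails, there is a point $x_0\in M$ and $\varepsilon_0>0$ such that no pair $s,t\in S$ simultaneously puts $x_0$ in $[s,t]_{\varepsilon_0}$ with $f(t)-f(s)>d(t,s)-\varepsilon_0$; I would then show that $f$ can be perturbed at $x_0$ — replacing $f(x_0)$ by $f(x_0)+\eta$ for a small $\eta\ne0$ — without violating the Lipschitz condition against any point of $S$ (this is where the quantitative failure of (ii) is used: the ``slack'' $\varepsilon_0$ in either the segment condition or the near-norm-attainment condition leaves room to move $f(x_0)$), extend by McShane, and obtain a distinct competitor.

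The main obstacle I expect is the converse when (ii) fails: one must verify that a single-point perturbation of $f$ at $x_0$ remains $1$-Lipschitz on $S\cup\{x_0\}$, which requires carefully using the negation of (ii) — namely that for every $s,t\in S$ either $d(s,x_0)+d(x_0,t)\ge d(s,t)+\varepsilon_0$ (so $x_0$ is ``far'' from the segment $[s,t]$ and the Lipschitz constraints at $x_0$ are not tight) or $f(t)-f(s)\le d(t,s)-\varepsilon_0$ (so $f$ itself has slack there) — to extract a definite interval of admissible values for the perturbed $f(x_0)$. Handling the infinite support and the interaction of this perturbation with the segment-covering structure (so that the new $f$ still attains norm $1$ at each molecule, which is automatic since the molecules live in $S$) is the delicate bookkeeping; everything else is a reprise of Lemma~\ref{lm:alpha_beta} and the construction already carried out in Theorem~\ref{th:sum_of_molecules_na}.
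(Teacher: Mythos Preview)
Your proposal is correct and uses the same ingredients as the paper: \v Smulyan's lemma, the uniqueness clause of Lemma~\ref{lm:alpha_beta} for condition (i), and McShane-type extensions for condition (ii). The forward direction ``(i)+(ii) $\Rightarrow$ uniqueness'' is argued exactly as in the paper. The only real difference is in the converse: you argue by contraposition (build a competitor when (i) or (ii) fails), whereas the paper argues directly. For (ii), the paper simply takes the largest and smallest $1$-Lipschitz extensions $g_1,g_2$ of $f|_N$ via the inf/sup McShane formulas, observes that uniqueness forces $g_1=g_2$, and then for any $x\in M$ reads off $s,t\in N$ as near-optimizers of those formulas at $x$. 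Your single-point perturbation at $x_0$ is the contrapositive of exactly this --- the negation of (ii) at $x_0$ with margin $\varepsilon_0$ is equivalent to $g_1(x_0)-g_2(x_0)\ge\varepsilon_0$, so one of $g_1,g_2$ (after normalizing at $0$) is already the competitor --- and the ``delicate bookkeeping'' you anticipate disappears once it is phrased this way. For (i) the paper likewise just invokes Lemma~\ref{lm:alpha_beta} rather than rebuilding a competitor through Theorem~\ref{th:sum_of_molecules_na}.

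One point worth tightening (the paper glosses over it as well): Lemma~\ref{lm:alpha_beta} only yields $g|_N=f|_N+c$ for some constant $c$, not $g|_N=f|_N$. Your hedge about ``the common normalization pins down the constant once $M$ meets the relevant segments'' is vague. The clean fix is to carry the constant through the (ii)-based estimate, obtaining $|g(x)-f(x)-c|<2\varepsilon$ for every $x\in M$, and then set $x=0$ to conclude $c=0$.
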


\begin{proof}
Denote $N=\set{x_1,y_1,x_2,y_2,\ldots}$, $\alpha_i=f(y_i)$ and $\beta_{ij}=d(x_i,y_j)-d(x_i,y_i)$.
Recall that $\mu$ is a point of G\^ateaux differentiability if and only if $f$ is unique. So assume that $f$ is unique, then condition (i) follows immediately from Lemma \ref{lm:alpha_beta}. To see condition (ii), let $g_1$ and $g_2$ be the largest and smallest 1-Lipschitz extensions of $f\restrict_N$ to $M$, respectively, given by
\begin{equation}
\label{eq:extensions_from_N}
\begin{split}
g_1(x) &= \inf_{p\in N}\pare{f(p)+d(p,x)} \\
g_2(x) &= \sup_{p\in N}\pare{f(p)-d(p,x)}
\end{split}
\end{equation}
for $x\in M$, and note that $g_1=g_2$ by assumption. Now fix $x\in M$ and let $s,t\in N$ be such that $g_1(x)+\frac{\varepsilon}{2}>f(t)+d(t,x)$ and $g_2(x)-\frac{\varepsilon}{2}<f(s)-d(s,x)$. Then
\begin{align*}
\varepsilon &> f(t)+d(t,x)-g_1(x)+g_2(x)-f(s)+d(s,x) \\
&= f(t)-f(s)+d(t,x)+d(s,x) \\
&\geq d(t,x)+d(s,x)-d(t,s)
\end{align*}
therefore $x\in [s,t]_\varepsilon$, and $f(s)-f(t)>d(t,x)+d(s,x)-\varepsilon\geq d(s,t)-\varepsilon$.

Now assume that conditions (i) and (ii) hold. Let $x\in M$, $\varepsilon>0$, then by (ii) there are $s,t\in N$ such that $x\in [s,t]_\varepsilon$ and $f(t)-f(s)>d(t,s)-\varepsilon$. Let $g\in\sphere{\Lip(M)}$ be such that $g(\mu)=1$.  By (i) and Lemma \ref{lm:alpha_beta} we have $g\restrict_N=f\restrict_N$ and in particular $g(s)=f(s)$ and $g(t)=f(t)$. Therefore
\begin{align*}
1 &\geq \frac{d(x,s)}{d(x,s)+d(x,t)}g(\mol{s,x}) + \frac{d(x,t)}{d(x,s)+d(x,t)} g(\mol{x,t}) \\
&= \frac{d(s,t)}{d(x,s)+d(x,t)}g(\mol{s,t}) \\
&= \frac{d(s,t)}{d(x,s)+d(x,t)}f(\mol{s,t}) > \frac{d(t,s)-\varepsilon}{d(x,s)+d(x,t)} > 1-\frac{2\varepsilon}{d(s,t)+\varepsilon}
\end{align*}
and it follows by a standard convexity argument that $g(\mol{s,x})$, $g(\mol{x,t})$ are both larger than $1-\varepsilon'$, where
$$
\varepsilon' = \frac{d(x,s)+d(x,t)}{\min\set{d(x,s),d(x,t)}}\cdot\frac{2\varepsilon}{d(s,t)+\varepsilon} < \frac{2\varepsilon}{\min\set{d(x,s),d(x,t)}} .
$$
From $g(\mol{s,x})>1-\varepsilon'$ and $g(s)=f(s)$ we get
$$
f(s)-d(s,x)\leq g(x) < f(s)-(1-\varepsilon')d(s,x) = f(s)-d(s,x)+\varepsilon' d(s,x)
$$
and, since this also applies to the case $g=f$, we have $\abs{g(x)-f(x)}<\varepsilon' d(s,x)$. A similar argument shows that $g(\mol{x,t})>1-\varepsilon'$ and $g(t)=f(t)$ imply that $\abs{g(x)-f(x)}<\varepsilon' d(t,x)$. Taking the minimum of these two bounds yields
$$
\abs{g(x)-f(x)} < \varepsilon'\cdot\min\set{d(x,s),d(x,t)} < 2\varepsilon .
$$
Since $x$, $\varepsilon$ and $g$ were arbitrary, we conclude that $f$ is unique and this ends the proof.
\end{proof}

We have thus characterised those points of G\^ateaux differentiability that may be expressed in the form \eqref{eq:convex_series}. The following example shows that not all points of G\^ateaux differentiability may be written as the limit of a convex series of molecules:

\begin{example}\label{example:bichogateaux}
Let $M=[0,1]$. It is well known (see e.g. \cite{Godefroy_2015}) that there is an onto linear isometry $\Phi: L_1([0,1])\longrightarrow \mathcal F(M)$ given by $\Phi(\chi_{[0,x]})=\delta_x$ for $x\in M$ (where $\chi$ denotes a characteristic function), whose adjoint operator $\Phi^*:\Lip(M)\longrightarrow L_\infty([0,1])$ is given by $\Phi(f)=f'$. Let $C$ be a nowhere dense closed subset of $[0,1]$ with positive Lebesgue measure, e.g. a ``fat Cantor set'' (see e.g. \cite[Example 1.40]{Weaver2} for one possible construction) and denote $D=[0,1]\setminus C$. Define $f=\chi_C-\chi_D\in S_{L_1([0,1])}$ and let $\mu=\Phi(f)\in S_{\mathcal F(M)}$. We claim that $\mu$ is a point of G\^ateaux differentiability that cannot be written as the limit of a convex series of molecules. This will be proved in two steps:

\vspace{0.3cm}
\emph{Step 1:} $\mu$ is a point of G\^ateaux differentiability.
\vspace{0.3cm}

Since $\Phi$ is an onto isometry, it is enough to prove that $f$ is a point of G\^ateaux differentiability in $L_1([0,1])$.
But notice that if $g\in S_{L_\infty([0,1])}$ satisfies that $\duality{f,g}=\int_0^1 gf\,dm=1$ (where $m$ is the Lebesgue measure) then $g=1$ a.e. on $C$ and $g=-1$ a.e. on $D$. So $g=f$ is unique in $S_{L_\infty([0,1])}$ and \v Smulyan's lemma yields the desired result.

\vspace{0.3cm}
\emph{Step 2:} $\mu$ is not the limit of a convex series of molecules.
\vspace{0.3cm}

Assume for contradiction that $\mu=\sum_{n=1}^\infty \lambda_n m_{x_n,y_n}$ where $x_n<y_n$, $\sum_{n=1}^\infty \abs{\lambda_n}=1$ and $\lambda_n\neq 0$ (but they may be negative). Denote $I_n=[x_n,y_n]$. By the definition of $\Phi$ we have
\[
    f =\Phi^{-1}(\mu) = \sum_{n=1}^\infty \lambda_n \Phi^{-1}(m_{x_n,y_n})
    = \sum_{n=1}^\infty (-\lambda_n) \frac{\chi_{I_n}}{d(x_n,y_n)} \,.
\]
Evaluating against $g=f$ seen as an element of $S_{L_\infty([0,1])}$, we get that
$$
1 = \duality{f,g} = \sum_{n=1}^\infty (-\lambda_n) \frac{1}{d(x_n,y_n)}\int_{x_n}^{y_n} f\,dm = \sum_{n=1}^\infty \lambda_n \frac{m(I_n\cap D)-m(I_n\cap C)}{d(x_n,y_n)} .
$$
Taking into account that each term multiplying $\lambda_n$ has absolute value less or equal to $1$, we get that
$$
\frac{m(I_n\cap D)-m(I_n\cap C)}{d(x_n,y_n)}=\operatorname{sign}(\lambda_n)
$$
for all $n$. Notice that, since $D$ is open and dense, $m(I_n\cap D)>0$ for every $n\in\mathbb N$. Therefore
\begin{align*}
\operatorname{sign}(\lambda_n)\cdot d(x_n,y_n) &= m(I_n\cap D)-m(I_n\cap C) \\
&> -m(I_n\cap D)-m(I_n\cap C) = -m(I_n) = -d(x_n,y_n)
\end{align*}
and thus $\lambda_n>0$ for every $n\in\mathbb N$. But then
$$
m(I_n\cap D)+m(I_n\cap C) = m(I_n) = d(x_n,y_n) = m(I_n\cap D)-m(I_n\cap C)
$$
shows that $m(I_n\cap C)=0$ for every $n\in\mathbb N$. Now define $h=\chi_C\in L_\infty([0,1])$. Then $\duality{\chi_{I_n},h} = \int_{x_n}^{y_n} \chi_C\,dm = m(I_n\cap C)=0$, and so
$$
0 = \duality{\sum_{n=1}^\infty (-\lambda_n) \frac{\chi_{I_n}}{d(x_n,y_n)},h} = \duality{f,h} = m(C) ,
$$
a contradiction. Consequently, $\mu$ is not the limit of a convex series of molecules, as claimed.
\end{example}

\begin{remark}\label{remark:miguel}
It is also possible to construct an example of point of G\^ateaux differentiability in $\lipfree{[0,1]}$ by making use of the theory of strong norm attainment (see Remark \ref{remark:motivacionsuma}). To this end, pick a measurable set $A\subset [0,1]$ such that $0<m(A\cap I)<m(I)$ holds for every open interval $I\subset [0,1]$. Let $f=\chi_A-\chi_{[0,1]\setminus A}\in L_1([0,1])$ and $\mu=\Phi(f)$, using the notation of Example \ref{example:bichogateaux}. Then the same argument shows that $\mu$ is a point of G\^ateaux differentiability of $\mathcal F(M)$.
However, $\mu$ is not the sum of a convex series of molecules, because otherwise $\Phi^{-1}(f)$ would be a strongly norm-attaining Lipschitz function, contradicting the fact that $d(f,\SA([0,1],\mathbb R))=1$ as is proved in \cite[p. 109]{Godefroy_2015}. We thank Miguel Mart\'in for pointing out this example to us.
\end{remark}

In view of \cite[Theorem 4.3]{PrRu_2018}, we may ask whether, under the assumptions of Theorem \ref{theo:caragateauxarbi}, $\mu$ is actually a point of Fr\'echet differentiability.
Let us recall that it is only possible for points of Fr\'echet differentiability to exist in $\lipfree{M}$ when $M$ is uniformly discrete and bounded \cite[Theorem 2.4]{BLR_2018}.
The following example reveals that even in that case the answer to our question is negative in general. This proves that an extension of \cite[Theorem 4.3]{PrRu_2018} to the infinitely supported setting would be false.

\begin{example}\label{example:ganofre}
Consider $M:=\mathbb N\cup\{0\}$ with $d(n,0)=1$ and $d(m,n)=2$ for every $m\neq n\in\NN$. Then $\mathcal F(M)$ is isometrically isomorphic to $\ell_1$, so the unit ball of $\mathcal F(M)$ does not have any point of Fr\'echet differentiability (cf. e.g. \cite[Example I.1.6(c)]{dgz}). However $\mu=\sum_{n=1}^\infty \frac{\delta_n}{2^n}$ is a point of G\^ateaux differentiability, as it is clear that if $f(\mu)=1$ and $f\in S_{\Lip(M)}$ then $f(n)=1$ holds for every $n\in\mathbb N$.
\end{example}

In spite of this example, it is possible to extend \cite[Theorem 4.3]{PrRu_2018} and show that Fr\'echet and G\^ateaux differentiability are indeed equivalent in the finitely supported setting:

\begin{theorem}
\label{th:carafrechet}
Let $M$ be a uniformly discrete, bounded pointed metric space and let $\mu\in\sphere{\lipfree{M}}$ be finitely supported. Write $\mu$ as a finite sum of the form \eqref{eq:convex_series} and let $f\in\sphere{\Lip(M)}$ be such that $f(\mu)=1$. The following assertions are equivalent:
\begin{enumerate}[label={\upshape{(\roman*)}}]
	\item\label{th:carafrechet1} $\mu$ is a point of Fr\'echet differentiability,
    \item\label{th:carafrechet2} $\mu$ is a point of G\^ateaux differentiability,
    \item\label{th:carafrechet3} for every pair of different numbers $j,k\in\set{1,\ldots,n}$ there is a finite sequence of different numbers $i_1,\ldots,i_m\in\set{1,\ldots,n}$ that contains both $j$ and $k$ and such that
    \begin{multline}
    \label{eq:suma_betas_0}
    d(x_{i_1},y_{i_1})+d(x_{i_2},y_{i_2})+\ldots+d(x_{i_m},y_{i_m}) \\
    = d(x_{i_1},y_{i_2})+d(x_{i_2},y_{i_3})+\ldots+d(x_{i_m},y_{i_1}) ,
    \end{multline}
    and for every $x\in M$ there are $s\neq t$ in $\set{x_1,y_1,\ldots,x_n,y_n}$ such that $f(t)-f(s)=d(t,s)$ and $x\in [s,t]$.
\end{enumerate}
\end{theorem}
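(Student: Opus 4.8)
The plan is to prove the chain $\ref{th:carafrechet1}\Rightarrow\ref{th:carafrechet2}\Rightarrow\ref{th:carafrechet3}\Rightarrow\ref{th:carafrechet1}$. The first implication is trivial since Fréchet differentiability always implies Gâteaux differentiability. For $\ref{th:carafrechet2}\Rightarrow\ref{th:carafrechet3}$, I would invoke Theorem \ref{theo:caragateauxarbi}: Gâteaux differentiability gives conditions (i) and (ii) of that theorem for every $\varepsilon>0$. In the finite setting these $\varepsilon$-conditions should upgrade to exact equalities. For the first part, this is exactly the content of Lemma \ref{lm:alpha_beta_finite}: since there are only finitely many sequences of distinct indices in $\set{1,\dots,n}$, the infimum defining $B_{jk}$ is attained and one may take $\varepsilon=0$, yielding \eqref{eq:suma_betas_0}. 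For the second part, one must extract, from the family of $\varepsilon$-witnesses $(s_\varepsilon,t_\varepsilon)$ with $x\in[s_\varepsilon,t_\varepsilon]_\varepsilon$ and $f(t_\varepsilon)-f(s_\varepsilon)>d(t_\varepsilon,s_\varepsilon)-\varepsilon$, a single exact witness; since $N=\set{x_1,y_1,\dots,x_n,y_n}$ is finite, some pair $(s,t)$ occurs for arbitrarily small $\varepsilon$, and passing to the limit gives $x\in[s,t]$ and $f(t)-f(s)=d(t,s)$. One must also check $s\neq t$, which follows because otherwise $d(t,s)=0$ forces $0>-\varepsilon$ trivially but $f(t)-f(s)=0$ only if $x\in[s,s]$, i.e. $x=s\in N$ — that degenerate case is handled by noting $x=x_i$ lies in $[x_i,y_i]$ with $f(x_i)-f(y_i)=d(x_i,y_i)$.

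**The main work: $\ref{th:carafrechet3}\Rightarrow\ref{th:carafrechet1}$.** This is where the boundedness and uniform discreteness of $M$ enter decisively, and I expect it to be the principal obstacle. By Šmulyan's lemma it suffices to show $\inf_{\alpha>0}\diam\set{g\in\sphere{\Lip(M)}:g(\mu)>1-\alpha}=0$. Fix $\alpha>0$ small and let $g\in\sphere{\Lip(M)}$ with $g(\mu)>1-\alpha$. The first goal is to show $g$ is close to $f$ on $N$: from $g(\sum_i\lambda_i\mol{x_i,y_i})>1-\alpha$ and $\lambda_i>0$ with $\sum\lambda_i=1$, a convexity argument gives $g(\mol{x_i,y_i})>1-\alpha/\lambda_i$ for each $i$, hence (for $\alpha$ small relative to $\min_i\lambda_i$) each $g(\mol{x_i,y_i})$ is close to $1$. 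Then, using \eqref{eq:suma_betas_0} together with the cycle structure described in the last sentence of Lemma \ref{lm:alpha_beta_finite} (which forces $\alpha_{i_r}=\alpha_{i_{r+1}}+\beta_{i_ri_{r+1}}$ along the equality cycles), one propagates this closeness: the near-equalities $g(x_{i_r})-g(y_{i_r})\approx d(x_{i_r},y_{i_r})$ along a cycle, combined with $1$-Lipschitzness, pin down $g$ on $N$ up to an additive constant, and since $g(0)=0=f(0)$ — or more carefully, since the graph connecting $N$ via these cycles meets every index — we get $\abs{g(p)-f(p)}<\eta(\alpha)$ for all $p\in N$, where $\eta(\alpha)\to0$ as $\alpha\to0$. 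Uniform discreteness is needed here so that dividing by $d(x_i,y_i)\geq\theta:=\inf_{u\neq v}d(u,v)>0$ does not blow up the error, and boundedness so that the number $n$ of pairs and the diameters stay controlled.

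**Finishing.** Once $g$ is uniformly close to $f$ on $N$, I use the second half of \ref{th:carafrechet3}: every $x\in M$ lies on an \emph{exact} segment $[s,t]$ with $s,t\in N$ and $f(t)-f(s)=d(t,s)$. Since $g$ is $1$-Lipschitz and $g(s)\approx f(s)$, $g(t)\approx f(t)$, we have $g(t)-g(s)\approx d(t,s)=d(t,x)+d(x,s)$, which forces $g(x)$ to lie within $O(\eta(\alpha))$ of $g(s)+d(s,x)\approx f(s)+d(s,x)=f(x)$ (here $x\in[s,t]$ exactly is what makes $f(x)=f(s)+d(s,x)$; one should verify $f$ is determined on all of $M$ by this, using that $M\subseteq\bigcup_i[x_i,y_i]$ as noted in the discussion preceding Theorem \ref{theo:caragateauxarbi}, or simply that the stated condition exhibits each $x$ on such a segment). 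Therefore $\norm{g-f}_\infty$ is small; combined with uniform discreteness — which lets one bound the Lipschitz constant of $g-f$ by $\frac{2}{\theta}\norm{g-f}_\infty$, or rather bound $\lipnorm{g-h}$ for two such $g,h$ directly via $\lipnorm{g-h}\leq\frac{\diam(M)}{\theta}$ is too crude, so one instead argues $\lipnorm{g-h}=\sup_{u\neq v}\frac{\abs{(g-h)(u)-(g-h)(v)}}{d(u,v)}\leq\frac{2\norm{g-h}_\infty}{\theta}$ — we conclude $\diam\set{g:g(\mu)>1-\alpha}\leq C\eta(\alpha)/\theta\to0$. The delicate point throughout is making the error propagation along the equality cycles quantitative and uniform in $g$; this is the step I would write out most carefully, and it is essentially a stability version of the rigidity already implicit in Lemma \ref{lm:alpha_beta_finite}.
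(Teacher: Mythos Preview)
Your proposal is correct and follows essentially the same route as the paper's proof: the chain $\ref{th:carafrechet1}\Rightarrow\ref{th:carafrechet2}\Rightarrow\ref{th:carafrechet3}\Rightarrow\ref{th:carafrechet1}$, with the middle implication handled via Lemma~\ref{lm:alpha_beta_finite} for the cycle equalities and a pigeonhole argument on the finite set $N$ for the segment condition, and the hard implication $\ref{th:carafrechet3}\Rightarrow\ref{th:carafrechet1}$ done by quantitatively propagating closeness of $g$ to $f$ along the equality cycles on $N$, extending to all of $M$ via the exact segments, and converting the $\norm{\cdot}_\infty$ estimate to $\lipnorm{\cdot}$ using uniform discreteness. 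The paper normalises by assuming $y_1=0$ (your ``$g(0)=0=f(0)$'' anchor), then obtains the explicit bound $\abs{f(u)-g(u)}<n^2D\varepsilon$ on $N$ by exactly the telescoping-along-cycles argument you describe; your identification of this step as the delicate one is accurate.
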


\begin{proof}
Denote $D=\diam(M)$, $\theta=\inf\limits_{x\neq y\in M} d(x,y)>0$, $N=\set{x_1,y_1,\ldots,x_n,y_n}$, $\alpha_i=f(y_i)$ and $\beta_{ij}=d(x_i,y_j)-d(x_i,y_i)$. It is clear that \ref{th:carafrechet1}$\Rightarrow$\ref{th:carafrechet2}. To prove that \ref{th:carafrechet2}$\Rightarrow$\ref{th:carafrechet3}, notice first that \eqref{eq:suma_betas_0} follows from Lemma \ref{lm:alpha_beta_finite}. Now fix $x\in M$, then the argument used in the proof of Theorem \ref{theo:caragateauxarbi} shows that for any $\varepsilon>0$ there are $s\neq t\in N$ such that $x\in [s,t]_\varepsilon$ and $f(t)-f(s)>d(t,s)-\varepsilon$. Since there are only finitely many possible choices for the pair $(s,t)$, one of them must be valid for arbitrarily small values of $\varepsilon$, and so \ref{th:carafrechet3} follows.

Finally, let us see that \ref{th:carafrechet3}$\Rightarrow$\ref{th:carafrechet1}. To this end, assume with no loss of generality that $y_1=0$.
Take $g\in S_{\Lip(M)}$ such that
$$g(\mu)>1-\frac{\varepsilon}{\min\limits_{1\leq i\leq n}\lambda_i},$$
which implies by a convexity argument that $g(m_{x_i,y_i})>1-\varepsilon$, and hence
\begin{equation}\label{equa:frechminisalto}
0 \leq d(x_i,y_i)-(g(x_i)-g(y_i)) < \varepsilon d(x_i,y_i) \leq D\varepsilon
\end{equation}
for every $i\in\set{1,\ldots,n}$.
Now pick $k\in\{2,\ldots, n\}$. By assumption there exists a finite sequence of different numbers $i_1,\ldots, i_m\in\{1,\ldots, n\}$ containing both $1$ and $k$ such that \eqref{eq:suma_betas_0} holds; assume $i_1=1$ without loss of generality.
Then we have
\[
1=\Vert g\Vert_L \geq \frac{g(x_{i_m})-g(y_{i_1})}{d(x_{i_m},y_{i_1})} = \frac{\displaystyle \sum_{t=1}^m \pare{g(x_{i_t})-g(y_{i_t})} + \sum_{t=1}^{m-1}\pare{g(y_{i_{t+1}})-g(x_{i_t})}}{d(x_{i_m},y_{i_1})}
\]
hence
\[
\begin{split}
d(x_{i_m},y_{i_1}) &\geq \sum_{t=1}^m \pare{g(x_{i_t})-g(y_{i_t})} + \sum_{t=1}^{m-1} \pare{g(y_{i_{t+1}})-g(x_{i_t})} \\
&> (1-\varepsilon)\sum_{t=1}^m d(x_{t_i},y_{t_i}) + \sum_{t=1}^{m-1} \pare{g(y_{i_{t+1}})-g(x_{i_t})} \\
&= -\varepsilon\sum_{t=1}^m d(x_{t_i},y_{t_i}) + \sum_{t=1}^{m-1}  \pare{d(x_{i_t},y_{i_t})-d(x_{i_t},y_{i_{t+1}})} \\
&\qquad + d(x_{i_m},y_{i_m}) + \sum_{t=1}^{m-1} \pare{d(x_{i_t},y_{i_{t+1}})-(g(x_{i_t})-g(y_{i_{t+1}}))} .
\end{split}
\]
Rearranging terms and applying \eqref{eq:suma_betas_0} we get
\[
\sum_{t=1}^{m-1} \pare{d(x_{i_t},y_{i_{t+1}})-(g(x_{i_t})-g(y_{i_{t+1}}))} < \varepsilon \sum_{t=1}^m d(x_{i_t},y_{i_t}) < mD\varepsilon .
\]
 Since $d(x_{i_t},y_{i_{t+1}})-(g(x_{i_t})-g(y_{i_{t+1}}))\geq 0$ holds for every $1\leq t\leq m-1$ we obtain
\begin{equation}\label{equa:frechidesalto}
0 \leq d(x_{i_t},y_{i_{t+1}})-(g(x_{i_t})-g(y_{i_{t+1}})) < mD\varepsilon .
\end{equation}
Now notice that this reasoning is also valid for the function $f$ in place of $g$, and therefore \eqref{equa:frechminisalto} and \eqref{equa:frechidesalto} imply that
\begin{align*}
\abs{(f-g)(x_{i_t})-(f-g)(y_{i_t})} &< D\varepsilon \\
\abs{(f-g)(x_{i_t})-(f-g)(y_{i_{t+1}})} &< mD\varepsilon
\end{align*}
for every $t\leq m-1$. But $f(y_1)=0=g(y_1)$, so after at most $m-1$ applications of these inequalities we get that

$$
\abs{f(u)-g(u)} < (m-1)(m+1)D\varepsilon < m^2 D\varepsilon \leq n^2 D\varepsilon
$$
for $u\in\set{x_k,y_k}$. Since $k$ was arbitrary, the inequality holds for every $u\in N$.

Now pick $x\in M$. By assumption there exists a pair of different points $s,t\in N$ such that $f(m_{s,t})=1$ and $x\in [s,t]$ and we get that
$$g(m_{s,t}) =1-(f-g)(m_{s,t}) >1-\frac{2n^2D}{\theta}\varepsilon.$$
Let us take into account that $x\in [s,t]$ implies that $m_{s,t}=\lambda_1 m_{s,x}+\lambda_2 m_{x,t}$ for some $\lambda_1,\lambda_2\geq 0$ with $\lambda_1+\lambda_2=1$. Assume with no loss of generality that $\lambda_1\geq \frac{1}{2}$, then we get that
$$g(m_{s,x})>1-\frac{4n^2D}{\theta}\varepsilon.$$
But $\vert (f-g)(s)\vert<n^2D \varepsilon$ and $f(m_{s,x})=1$, so a convexity argument yields $\vert (f-g)(x)\vert<\left(\frac{4}{\theta}+1\right) n^2 D \varepsilon$.
Since $x\in M$ was arbitrary we deduce that
$$\Vert f-g\Vert_\infty\leq \left(\frac{4}{\theta}+1\right) n^2 D\cdot \varepsilon.$$
Finally, since $M$ is uniformly discrete and bounded, $\Vert\cdot\Vert_L$ and $\Vert \cdot\Vert_\infty$ are equivalent norms on $\Lip(M)$, so there exists a constant $K>0$ that depends only on $M$ and $n$ such that $\Vert f-g\Vert_L<K\varepsilon$.

Summarising: we have proved that, given any $\varepsilon>0$, there exists $\delta>0$ such that if $g\in S_{\Lip(M)}$ satisfies that $g(\mu)>1-\delta$ then $\Vert f-g\Vert_L<\varepsilon$. By \v{S}mulyan's lemma $\mu$ is a point of Fr\'echet differentiability and we are done.
\end{proof}

The previous theorem shows that, when dealing with finitely supported elements in Lipschitz free spaces over uniformly discrete and bounded metric spaces, G\^ateaux and Fr\'echet differentiability are equivalent. Example \ref{example:ganofre} reveals that this is not the case when dealing with elements of infinite support. A closer look at this example shows that the metric space is still union of metric segments; the failure of the Fr\'echet differentiability comes now from the fact that there are infinitely many segments which are uniformly separated. This phenomenon will become clear with the following result.

\begin{proposition}
\label{pr:frechet_segment_nbhd}
Let $M$ be a uniformly discrete, bounded pointed metric space,
and let $\mu\in\sphere{\lipfree{M}}$ be of the form \eqref{eq:convex_series}.
Suppose that $\mu$ is a point of Fr\'echet differentiability. Then for every $\varepsilon>0$ there is $n\in\NN$ such that
\begin{equation}
\label{eq:inclusion_frechet}
M\subset\bigcup_{s,t\in\set{x_1,y_1,\ldots,x_n,y_n}} [s,t]_\varepsilon .
\end{equation}
More precisely, one can restrict the union to those pairs $s,t$ such that $f(s)-f(t)>d(s,t)-\varepsilon$, where $f\in\sphere{\Lip(M)}$ is the Fr\'echet derivative of the norm at $\mu$.
\end{proposition}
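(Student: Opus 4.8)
The plan is to combine \v{S}mulyan's lemma with a truncation of the convex series representing $\mu$ and a McShane extension argument. Fix $\varepsilon>0$ and write $D:=\diam(M)<\infty$ and $\eta:=\varepsilon/(2D)$. Since $\mu$ is a point of Fr\'echet differentiability with derivative $f$, \v{S}mulyan's lemma yields $\delta>0$ such that every $g\in\ball{\Lip(M)}$ with $g(\mu)>1-\delta$ satisfies $\lipnorm{g-f}<\eta$. Split $\mu=\mu_n+\nu_n$, where $\mu_n=\sum_{j\leq n}\lambda_j\mol{x_j,y_j}$ and $\nu_n=\sum_{j>n}\lambda_j\mol{x_j,y_j}$; then $\norm{\nu_n}\leq\sum_{j>n}\lambda_j=:r_n$, and since $\sum_j\lambda_j=1$ we have $r_n\to 0$, so I fix $n$ with $2r_n<\delta$.

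Next I would build two functionals that agree with $f$ only on the first $n$ pairs. Set $N_n:=\set{x_1,y_1,\ldots,x_n,y_n}$, a finite set, and let $\tilde g_1,\tilde g_2$ be, respectively, the pointwise largest and smallest $1$-Lipschitz extensions of $f\restrict_{N_n}$ to $M$, given by the $\inf$ and $\sup$ formulas of \eqref{eq:extensions_from_N} with $N$ replaced by $N_n$; then put $g_i:=\tilde g_i-\tilde g_i(0)$, which vanishes at $0$ and hence lies in $\ball{\Lip(M)}$. The key point is that molecules annihilate additive constants, so $g_i(\mol{x_j,y_j})=\frac{\tilde g_i(x_j)-\tilde g_i(y_j)}{d(x_j,y_j)}=f(\mol{x_j,y_j})=1$ for every $j\leq n$; consequently $g_i(\mu_n)=\sum_{j\leq n}\lambda_j=1-r_n$, and together with $g_i(\nu_n)\geq-\norm{\nu_n}\geq-r_n$ this gives $g_i(\mu)\geq 1-2r_n>1-\delta$. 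By the choice of $\delta$ we obtain $\lipnorm{g_i-f}<\eta$ for $i=1,2$, and since the Lipschitz seminorm is unchanged by adding constants, $\lipnorm{\tilde g_1-\tilde g_2}=\lipnorm{g_1-g_2}<2\eta$.

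It remains to read off the segment membership. Fix $x\in M$; since $N_n$ is finite, the extremal formulas are attained, say $\tilde g_1(x)=f(p_1)+d(p_1,x)$ and $\tilde g_2(x)=f(p_2)-d(p_2,x)$ with $p_1,p_2\in N_n$. On the one hand, $\lipnorm{f}\leq 1$ gives
\[
\tilde g_1(x)-\tilde g_2(x)=\bigl(f(p_1)-f(p_2)\bigr)+d(p_1,x)+d(p_2,x)\geq d(p_1,x)+d(p_2,x)-d(p_1,p_2);
\]
on the other hand, $\tilde g_1-\tilde g_2$ vanishes on $N_n$, so choosing $x_1\in N_n$ and using $d(x,x_1)\leq D$,
\[
\tilde g_1(x)-\tilde g_2(x)=(\tilde g_1-\tilde g_2)(x)-(\tilde g_1-\tilde g_2)(x_1)\leq\lipnorm{\tilde g_1-\tilde g_2}\,d(x,x_1)<2\eta D=\varepsilon.
\]
Combining the two bounds, $d(p_1,x)+d(p_2,x)<d(p_1,p_2)+\varepsilon$, i.e.\ $x\in[p_1,p_2]_\varepsilon$, which proves \eqref{eq:inclusion_frechet}. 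Moreover the same two inequalities yield $f(p_1)-f(p_2)<\varepsilon-d(p_1,p_2)$, so relabelling $s:=p_2$ and $t:=p_1$ gives $f(s)-f(t)>d(s,t)-\varepsilon$ together with $x\in[s,t]_\varepsilon$, which is the stated refinement.

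The only step that is not pure bookkeeping is the idea behind the second paragraph: replacing $\mu$ by its partial sum $\mu_n$ and re-extending $f\restrict_{N_n}$ by McShane produces functionals that \emph{still nearly attain the norm at $\mu$} --- this works precisely because both the molecules $\mol{x_j,y_j}$ and the McShane formulas are blind to the additive constant $\tilde g_i(0)$ --- so that the \emph{uniform} modulus provided by Fr\'echet differentiability can be transferred into an upper bound on $d(p_i,x)$ that is uniform in $x\in M$ once $n$ is fixed. I do not foresee other obstacles; the remainder is just the triangle inequality and the definitions of $\lipnorm{\cdot}$ and of $[\cdot,\cdot]_\varepsilon$.
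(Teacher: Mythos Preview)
Your proof is correct and follows essentially the same line as the paper's: truncate the series, apply \v{S}mulyan, and compare the maximal and minimal McShane extensions of $f\restrict_{N_n}$ at a given point $x$. The only minor difference is that the paper first passes to the supremum norm via the equivalence $\lipnorm{\cdot}\sim\norm{\cdot}_\infty$ (which uses uniform discreteness), whereas you bound $(\tilde g_1-\tilde g_2)(x)$ directly by $\lipnorm{\tilde g_1-\tilde g_2}\cdot D$, so your argument actually needs only the boundedness of $M$.
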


\begin{proof}
Since the Lipschitz and supremum norms are equivalent in $\Lip(M)$, we may find $\delta>0$ such that $\norm{f-g}_\infty< \frac{\varepsilon}{2}$ whenever $g\in\sphere{\Lip(M)}$ satisfies $g(\mu)>1-2\delta$. Choose $n\in\NN$ such that $\sum_{k>n}\lambda_k<\delta$ and denote $N=\set{x_1,y_1,\ldots,x_n,y_n}$. Now let $g_1$ and $g_2$ be the largest and smallest 1-Lipschitz extensions of $f\restrict_N$ to $M$, respectively, given by \eqref{eq:extensions_from_N}.
Notice that
\begin{align*}
g_i(\mu) &= \sum_{k=1}^n \lambda_k f(\mol{x_k,y_k}) + \sum_{k>n} \lambda_k g_i(\mol{x_k,y_k}) \\
&= 1 + \sum_{k>n} \lambda_k (g_i-f)(\mol{x_k,y_k}) \geq 1-2\sum_{k>n} \lambda_k > 1-2\delta
\end{align*}
for $i=1,2$, therefore $\norm{g_1-g_2}_\infty<\varepsilon$. Now fix $x\in M$ and let $s,t\in N$ be such that $g_1(x)=f(t)+d(t,x)$ and $g_2(x)=f(s)-d(s,x)$. Then
$$
\varepsilon > g_1(x)-g_2(x) = f(t)-f(s)+d(t,x)+d(s,x) \geq d(t,x)+d(s,x)-d(t,s)
$$
and therefore $x\in [s,t]_\varepsilon$, and $f(s)-f(t)>d(t,x)+d(s,x)-\varepsilon\geq d(s,t)-\varepsilon$.
\end{proof}

We may wonder whether the conclusion of Proposition \ref{pr:frechet_segment_nbhd} holds in an uniform way, i.e. whether we can find $n\in\NN$ such that the inclusion \eqref{eq:inclusion_frechet} holds for arbitrarily small $\varepsilon$. This is not possible in general: indeed, using the same argument as in the proof of \ref{th:carafrechet2}$\Rightarrow$\ref{th:carafrechet3} in Theorem \ref{th:carafrechet}, it is easy to see that this would imply that $M$ can be covered by finitely many segments $[s,t]$ such that $s\neq t\in\set{x_1,y_1,x_2,y_2,\ldots}$ and $f(\mol{s,t})=1$.

We finish the section by exhibiting a point of Fr\'echet differentiability of infinite support.
We will construct an element of the form $\mu=\sum_{n=1}^\infty \lambda_n m_{x_n,0}$. Our strategy will be to define the sequence $(x_n)$ in such a way that $x_n$ approaches the segment $[0,x_1]$ as $n$ increases.
Consequently, if $g(\mu)$ is large, then $g(x_n)$ should behave like $d(x_n,0)$ for small $n$ and it should be almost determined for large $n$ because $x_n$ is then close to $[0,x_1]$.
For such behavior, we consider in the following example the metric space defined in \cite[Example 4.3]{AG_19}:

\begin{example}\label{examp:frechetsopinf}
Let $M:=\{0\}\cup\{x_n:n\in\mathbb N\}\subseteq c_0$ where $x_1:=2e_1$ and $x_n:=e_1+(1+\frac{1}{2^n})e_n$ for $n\geq 2$, and
define $\mu:=\sum_{n=1}^\infty \frac{1}{2^n} m_{x_n,0}$. Notice that $\mu\in S_{\mathcal F(M)}$. Indeed, consider $f(x):=d(x,0)$, and note that $f(\mu)=1$. Let us prove that $\mu$ is a point of Fr\'echet differentiability. To do so pick an arbitrary $n\geq 3$, define $\varepsilon:=\frac{1}{2^n}$ and assume that $g\in S_{\Lip(M)}$ satisfies that $g(\mu)=\sum_{n=1}^\infty \frac{1}{2^n}g(m_{x_n,0})>1-\varepsilon^2$. Let
$$P:=\{k\in\mathbb N: g(m_{x_k,0})\leq 1-\varepsilon\}.$$
Then $\sum_{k\in P} \frac{1}{2^k}<\varepsilon$: indeed,
\[
\begin{split}
1-\varepsilon^2<\sum_{k=1}^\infty \frac{1}{2^k}g(m_{x_k,0})& =\sum_{k\in P} \frac{1}{2^k}g(m_{x_k,0})+\sum_{k\notin P}\frac{1}{2^k}g(m_{x_k,0})\\
& \leq (1-\varepsilon)\sum_{k\in P} \frac{1}{2^k}+\sum_{k\notin P} \frac{1}{2^k}\\
& =\sum_{k=1}^\infty \frac{1}{2^k}-\varepsilon \sum_{k\in P}\frac{1}{2^k}\\
& = 1-\varepsilon\sum_{k\in P}\frac{1}{2^k},
\end{split}
\]
as desired. Let us estimate $\Vert f-g\Vert_\infty$ from the above inequality. On the one hand, if $k\notin P$, we get that $f(m_{x_k,0})=1$ and $g(m_{x_k,0})>1-\varepsilon$, from where $\vert (f-g)(m_{x_k,0})\vert<\varepsilon$, so
$$\vert (f-g)(x_k)\vert\leq \varepsilon d(x_k,0)\leq 2\varepsilon.$$
On the other hand, if $k\in P$ notice that $k>n$ because $\sum_{k\in P}\frac{1}{2^k}<\varepsilon=\frac{1}{2^n}$.
In particular $1\notin P$ and so we have
\[
\begin{split}
    1-\varepsilon<\frac{g(x_1)-g(0)}{d(x_1,0)}& =\frac{g(x_1)-g(x_k)+g(x_k)-g(0)}{2}\\
    & \leq \frac{d(x_1,x_k)+g(x_k)}{2}\\
    & =\frac{1+\frac{1}{2^k}+g(x_k)}{2},
\end{split}
\]
and we get 
$$g(x_k)>1-2\varepsilon-\frac{1}{2^k}=1-\frac{1}{2^{n-1}}-\frac{1}{2^k}.$$
Since $f(x_k)=1+\frac{1}{2^k}$, we get that 
$$\vert (f-g)(x_k)\vert<\frac{1}{2^{n-1}}+\frac{2}{2^k}\leq \frac{1}{2^{n-2}}.$$
To sum up, we have proved that, for every $n\geq 3$, if $g\in S_{\Lip(M)}$ satisfies that $g(\mu)>1-\frac{1}{2^{2n}}$ then $\Vert f-g\Vert_\infty<\frac{1}{2^{n-2}}$. Now \v{S}mulyan's lemma, together with the fact that the $\lipnorm{\cdot}$ and $\norm{\cdot}_\infty$ norms in $\Lip(M)$ are equivalent, implies that $\mu$ is a point of Fr\'echet differentiability.
\end{example}

\section{Remarks and open questions}\label{section:open}

As we pointed out in the Introduction, we will apply the techniques of Theorem \ref{th:carafrechet} to obtain a result of differentiability in a projective tensor product. In order to do so, let us introduce a bit of notation. Given two Banach spaces $X$ and $Y$, recall that the \textit{projective tensor product} of $X$ and $Y$, denoted by $X\pten Y$, is the completion of $X\otimes Y$ under the norm given by
$$
   \Vert u \Vert :=
   \inf\left\{
      \sum_{i=1}^n  \Vert x_i\Vert\Vert y_i\Vert
      : u=\sum_{i=1}^n x_i\otimes y_i
      \right\}.
$$
It is well known that, given two Banach spaces $X$ and $Y$, then
$(X\pten Y)^*=\mathcal L(X,Y^*)$ (see \cite{rya} for background).

Let $M$ be a uniformly discrete and bounded metric space and $Z$ be a Banach space. In this context, notice that $(\mathcal F(M)\pten Z)^*=\mathcal L(\mathcal F(M),Z^*)=\Lip(M,Z^*)$. Let $\mu=\sum_{i=1}^n \lambda_i m_{x_i,y_i}\in S_{\mathcal F(M)}$ where $\lambda_i>0$ and $\sum_{i=1}^n\lambda_i=1$, and take $z\in S_Z$. Assume that both $\mu$ and $z$ are points of Fr\'echet differentiability, with respective derivatives $f\in S_{\Lip(M)}$ and $z^*\in S_{Z^*}$. We claim that $\mu\otimes z$ is then a point of Fr\'echet differentiability in $\mathcal F(M)\pten Z$ and that its derivative is $f\otimes z^*\in \Lip(M,Z^*)$. To prove it, pick $\varepsilon>0$ assume that $g\in S_{\Lip(M,Z^*)}$ satisfies that
$$g(m_{x_i,y_i})(z)\geq 1-\varepsilon$$
holds for every $i\in\{1,\ldots, n\}$. Following the proof of \ref{th:carafrechet3}$\Rightarrow$\ref{th:carafrechet1} in Theorem \ref{th:carafrechet} we can replace equation \eqref{equa:frechminisalto} with
\begin{equation}\label{equa:frechminisaltovector} \tag{\ref*{equa:frechminisalto}*}
0 \leq d(x_i,y_i)-(g(x_i)-g(y_i))(z) < D\varepsilon.
\end{equation}
In a similar way, we can replace equation \eqref{equa:frechidesalto} with 
\begin{equation}\label{equa:frechidesaltovector} \tag{\ref*{equa:frechidesalto}*}
0 \leq d(x_{i_t},y_{i_{t+1}})-(g(x_{i_t})-g(y_{i_{t+1}}))(z) < mD\varepsilon .
\end{equation}
Notice now that, since $z$ is a point of Fr\'echet differentiability, by \v Smulyan's lemma we get the existence of a function $\delta:\mathbb R^+\longrightarrow \mathbb R^+$ with $\lim\limits_{\varepsilon\rightarrow 0}\delta(\varepsilon)=0$ and such that the following condition holds
$$\left.\begin{array}{c}
     v^*\in S_{Z^*} \\
     v^*(z)\geq 1-\varepsilon 
\end{array} \right\}\Rightarrow \Vert z^*-v^*\Vert\leq \delta(\varepsilon).$$
The previous condition together with \eqref{equa:frechminisaltovector} and \eqref{equa:frechidesaltovector} implies that
\begin{align*}
\Vert d(x_i,y_i)z^*-(g(x_i)-g(y_i))\Vert &\leq \delta(D\varepsilon) \\
\Vert d(x_{i_t},y_{i_{t+1}})z^*-(g(x_{i_t})-g(y_{i_{t+1}})) \Vert &\leq \delta(m D \varepsilon)
\end{align*}

Continuing the proof in the same way we obtain that $\Vert g-f\otimes z^*\Vert_\infty < \eta(\varepsilon)$ for some function $\eta:\RR^+\longrightarrow\RR^+$ that only depends on $M$ and $n$ and such that $\lim\limits_{\varepsilon\rightarrow 0}\eta(\varepsilon)=0$. From here, we easily deduce by \v Smulyan's lemma that $f\otimes z^*$ is the Fr\'echet derivative of $\mu\otimes z$.

Finally, if we suppose instead that $z$ is a point of G\^ateaux differentiability and fix $\varepsilon=0$ and $\delta\equiv 0$ in the above argument, we deduce in a similar way that $f\otimes z^*$ is the G\^ateaux derivative of $\mu\otimes z$. The following statement sums up our findings:

\begin{theorem}\label{th:diferenvectorval}
Let $M$ be a uniformly discrete and bounded metric space and let $\mu\in S_{\mathcal F(M)}$ be a point of Fr\'echet differentiability with finite support. Let $X$ be a Banach space and $x\in S_X$. Then:
\begin{enumerate}[label={\upshape{(\alph*)}}]
    \item If $x$ is a point of G\^ateaux differentiability of $X$, then $\mu\otimes x$ is a point of G\^ateaux differentiability of $\mathcal F(M)\pten X$.
    \item If $x$ is a point of Fr\'echet differentiability of $X$, then $\mu\otimes x$ is a point of Fr\'echet differentiability of $\mathcal F(M)\pten X$.
\end{enumerate}
\end{theorem}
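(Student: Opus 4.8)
The plan is to replay the proof of the implication \ref{th:carafrechet3}$\Rightarrow$\ref{th:carafrechet1} of Theorem \ref{th:carafrechet}, one level up in the vector-valued setting. The starting point is the identification $(\lipfree{M}\pten X)^*=\mathcal L(\lipfree{M},X^*)=\Lip(M,X^*)$, valid because $M$ is uniformly discrete and bounded; the candidate derivative is $f\otimes x^*\in\Lip(M,X^*)$, which has norm one and satisfies $(f\otimes x^*)(\mu\otimes x)=f(\mu)x^*(x)=1$. By \v Smulyan's lemma, for (b) it suffices to show that for every $\varepsilon>0$ there is $\delta>0$ so that every $g\in S_{\Lip(M,X^*)}$ with $g(\mu\otimes x)>1-\delta$ satisfies $\lipnorm{g-f\otimes x^*}<\varepsilon$; for (a) one wants the same with $\delta=0$ and the conclusion that $g=f\otimes x^*$.

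Writing $\mu=\sum_{i=1}^n\lambda_i\mol{x_i,y_i}$ with $\lambda_i>0$ and $\sum_i\lambda_i=1$, the first step is the convexity reduction: from $g(\mu\otimes x)=\sum_i\lambda_i\,g(\mol{x_i,y_i})(x)>1-\delta$ together with $g(\mol{x_i,y_i})(x)\le\norm{g(\mol{x_i,y_i})}\le 1$ one gets $g(\mol{x_i,y_i})(x)>1-\varepsilon$ for each $i$, provided $\delta<\varepsilon\min_i\lambda_i$. This is exactly the scalar hypothesis that drives Theorem \ref{th:carafrechet}, with $g(\cdot)(x)$ playing the role of $g(\cdot)$; so the same chain-of-indices argument, based on the equalities \eqref{eq:suma_betas_0} supplied by \ref{th:carafrechet3}, yields for each pair $j\neq k$ a sequence $i_1,\dots,i_m$ along which the estimates \eqref{equa:frechminisalto} and \eqref{equa:frechidesalto} hold in the form \eqref{equa:frechminisaltovector} and \eqref{equa:frechidesaltovector}, i.e. with scalar increments replaced by their values at $x$.

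The genuinely new step is to upgrade these scalar bounds to bounds on the $X^*$-valued increments. For $u,v\in N:=\set{x_1,y_1,\dots,x_n,y_n}$ put $w=g(u)-g(v)$ and $r=d(u,v)$; then $\norm{w}\le r$, and in the relevant cases $w(x)\ge(1-c\varepsilon)r$ for a constant $c$ depending only on $M$ and $n$. Hence $w/r\in B_{X^*}$ with $(w/r)(x)\ge 1-c\varepsilon$, and normalising to $S_{X^*}$ costs only an extra $c\varepsilon$ in the value at $x$; since $x$ is a point of Fr\'echet differentiability of $X$, \v Smulyan's lemma then provides a modulus $\delta_X$ with $\delta_X(t)\to 0$ as $t\to 0^+$ such that $\norm{x^*-w/r}\le\delta_X(c\varepsilon)$, whence $\norm{r\,x^*-w}\le D\,\delta_X(c\varepsilon)$ with $D=\diam(M)$. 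Feeding these into the telescoping argument of Theorem \ref{th:carafrechet}---now with $X^*$-valued increments of $f\otimes x^*-g$ replacing scalar increments of $f-g$, and with $g(y_1)=0=(f\otimes x^*)(y_1)$ after the reduction $y_1=0$---gives $\norm{(f\otimes x^*)(u)-g(u)}\le\eta(\varepsilon)$ for all $u\in N$. Since \ref{th:carafrechet3} also gives $M\subset\bigcup_{s,t\in N}[s,t]$ with $f(\mol{s,t})=1$ on each such segment, an equality-case analysis of the triangle inequality in $X^*$ propagates this to $\norm{f\otimes x^*-g}_\infty\le\eta(\varepsilon)$, with $\eta(\varepsilon)\to 0$ depending only on $M$, $n$ and $\delta_X$; the equivalence of $\lipnorm{\cdot}$ and $\norm{\cdot}_\infty$ on $\Lip(M,X^*)$ then converts this into a Lipschitz-norm estimate, and \v Smulyan's lemma yields (b). For (a), one takes $\varepsilon=0$: G\^ateaux differentiability of $x$ means $x^*$ is the unique norm-one functional with $x^*(x)=1$, so $\delta_X\equiv 0$ and the same computation forces every norm-one $g$ with $g(\mu\otimes x)=1$ to equal $f\otimes x^*$.

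The step I expect to require the most care is verifying that this substitution really goes through: one must check that the non-negativity observations underlying the telescoping bound in Theorem \ref{th:carafrechet} (for instance $d(x_{i_t},y_{i_{t+1}})-(g(x_{i_t})-g(y_{i_{t+1}}))(x)\ge 0$) survive verbatim, that the bounds $\norm{g(u)-g(v)}\le d(u,v)$ take over from the scalar bounds wherever they are used, and that the modulus $\delta_X$ composes through the $O(n)$ steps of the chain so that the resulting $\eta$ still tends to $0$; the passage from $B_{X^*}$ to $S_{X^*}$ when invoking \v Smulyan's lemma for $X$ is routine but should be made explicit.
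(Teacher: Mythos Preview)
Your proposal is correct and follows essentially the same route as the paper: identify $(\lipfree{M}\pten X)^*=\Lip(M,X^*)$, replay the argument of \ref{th:carafrechet3}$\Rightarrow$\ref{th:carafrechet1} in Theorem \ref{th:carafrechet} with the scalar increments $g(\cdot)$ replaced by $g(\cdot)(x)$, then use \v Smulyan's lemma for $x$ to upgrade the resulting scalar bounds \eqref{equa:frechminisaltovector}, \eqref{equa:frechidesaltovector} to norm bounds in $X^*$, and conclude via the equivalence of $\lipnorm{\cdot}$ and $\norm{\cdot}_\infty$ on $\Lip(M,X^*)$. You are in fact slightly more careful than the paper in two places: you make explicit the passage from $B_{X^*}$ to $S_{X^*}$ before invoking \v Smulyan, and you spell out the propagation from $N$ to all of $M$ via the segments $[s,t]$; both are handled correctly.
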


\begin{remark}\label{remark:contraejetensodiffe}
 Let $X$ and $Y$ be two Banach spaces. Notice that, in general, it is not true that if $x\in S_X$ and $y\in S_Y$ are points of Fr\'echet (resp. G\^ateaux) differentiability then $x\otimes y$ is a point of Fr\'echet (resp. G\^ateaux) differentiability of $X\pten Y$.
\begin{enumerate}
    \item   Given $1<p\leq q<\infty$ it follows from \cite[Example VI.4.1]{hww} and \cite[Theorem 5.33]{rya} that $\ell_p\pten \ell_{q^*}$ is a non-reflexive $L$-summand in its bidual, where $\frac{1}{q}+\frac{1}{q^*}=1$ (see \cite[Chapters I and IV]{hww} for background), and so it does not contain any point of Fr\'echet differentiability \cite[p. 168, (b)]{hww}.
    
    \item If $X=Y=\ell_2$ and $x\in S_X$ then $x\otimes x$ attains its norm at the functionals $T,S\in (X\pten X)^*=\mathcal L(X,X^*)$ defined by
    \begin{align*}
    T(u\otimes v) &= \duality{u,v} , \\
    S(u\otimes v) &= \duality{x,u} \duality{x,v} .
    \end{align*}
    This shows that $x\otimes x$ is not a point of G\^ateaux differentiability in spite of the fact that $x$ is a point of G\^ateaux differentiability. The authors are grateful to Gin\'es L\'opez-P\'erez for pointing out this example to them.
    \end{enumerate}
\end{remark} 

Let us finish this section with two open questions.
First, we have obtained in Theorem \ref{theo:caragateauxarbi} a metric characterisation of those limits of convex series of molecules which are points of G\^ateaux differentiability. However, in the case of Fr\'echet differentiability we have only characterised elements of finite support. 

\begin{question}
Is there any metric characterisation of limits of convex series with infinite support that are points of Fr\'echet differentiability?
\end{question}

Second, all our work on points of Fr\'echet differentiability has focused on limits of convex series of molecules, as this allows us to consider a fixed set of points in the metric space $M$. However, the following question makes sense.

\begin{question}
Let $M$ be a uniformly discrete and bounded metric space. Is there any point of Fr\'echet differentiability $\mu\in\lipfree{M}$ that is not the limit of a convex series of molecules?
\end{question}

\section*{Acknowledgments}

This research was conducted during visits of the first author to the University of Granada in 2019 and 2020, for which he wishes to express his gratitude. We also thank Gin\'es L\'opez-P\'erez, Miguel Mart\'in and Anton\'in Proch\'azka for their useful remarks on the topic of the paper.

% BIBLIOGRAPHY

\end{document}